\documentclass{rmmcart}
\title{Multiplicative Lie-type derivations on Alternative Rings}

\author{Bruno Leonardo Macedo Ferreira}
\address{{\it Federal Technological University of Paran\'{a}\\
Professora Laura Pacheco Bastos Avenue, 800\\
85053-510, Guarapuava, Brazil.}}
\email{brunoferreira@utfpr.edu.br}

\author{Henrique Guzzo Jr.}
\address{{\it Institute of Mathematics and Statistics, 
University of S\~{a}o Paulo\\
Mat\~{a}o Street, 1010\\
05508-090, S\~{a}o Paulo, Brazil.}}
\email{guzzo@ime.usp.br}

\author{Feng Wei}
\address{{\it School of Mathematics and Statistics, Beijing Institute of Technology,
Beijing\\
100081, P. R. China.}}
\email{daoshuo@bit.edu.cn}

\date{04, 10, 2019}

\keywords{Alternative ring, multiplicative Lie-type derivation, additivity, prime alternative rings}
\subjclass{17A36, 17D05}


\pagestyle{plain}
\usepackage{amssymb}
\pagestyle{plain}

\usepackage[latin1]{inputenc}

\pagestyle{plain}

\theoremstyle{plain}
\newtheorem{theorem}{Theorem}[section]
\newtheorem{lemma}[theorem]{Lemma}
\newtheorem{proposition}[theorem]{Proposition}

\newtheorem{corollary}[theorem]{Corollary}

\theoremstyle{definition}

\newtheorem{remark}[theorem]{Remark}

\allowdisplaybreaks[4]

\def\R{{\mathfrak R}\, }
\def\D{{\mathfrak D}\, }

\def\R{{\mathfrak R}\, }

\begin{document}


\begin{abstract}
Let $\R$ be an alternative ring containing a nontrivial idempotent and $\D$ be a multiplicative Lie-type derivation 
from $\R$ into itself. Under certain assumptions on $\R$, we prove that $\D$ is almost additive. 
Let $p_n(x_1, x_2, \cdots, x_n)$ be the $(n-1)$-th commutator defined by $n$ indeterminates $x_1, \cdots, x_n$. 
If $\R$ is a unital alternative ring with a nontrivial idempotent and is $\{2,3,n-1,n-3\}$-torsion free, it is shown under certain condition 
of $\R$ and $\D$, that $\D=\delta+\tau$, where $\delta$ is a derivation and 
$\tau\colon\R\longrightarrow{\mathcal Z}(\R)$ such that $\tau(p_n(a_1,\ldots,a_n))=0$ for all $a_1,\ldots,a_n\in\R$.
\end{abstract}
\maketitle

\vspace{0.5 in}

\section{Introduction and Preliminaries}

Let $\mathfrak{A}$ be an associative ring. We define the {\it Lie product} $[x,y]:=xy-yx$ and {\it Jordan product} $x\circ y:=xy+yx$ for all $x,y\in \mathfrak{A}$. 
Then $(\mathfrak{A},[\ ,\ ])$  becomes a Lie algebra and $(\mathfrak{A},\circ)$ is a Jordan algebra. It is a fascinating topic to study the 
connection between the associative, Lie and Jordan structures on $\mathfrak{A}$. In this field, two classes of 
mappings are of crucial importance. One of them consists of mappings, preserving a type of product, 
for example, Jordan homomorphisms and Lie homomorphisms. The other one is formed by differential operators, 
satisfying a type of Leibniz formulas, such as Jordan derivations and Lie derivations.
In the AMS Hour Talk of 1961, Herstein proposed many problems concerning the structure of 
Jordan and Lie mappings in associative simple and prime rings~\cite{Her}. Roughly speaking, 
he conjectured that these mappings are all of the proper or standard forms. The renowned 
Herstein's Lie-type mapping research program was formulated since then. Martindale 
gave a major force in this program under the assumption that the rings contain some 
nontrivial idempotents \cite{Mar64}. The first idempotent-free result on Lie-type mappings was obtained 
by Bre\v sar in \cite{Bresar}. The structures of derivations and Lie derivations on (non-)associative rings 
were studied systematically by many people 
(cf. \cite{Abdu, Benkovic, BenkovicEremita, Bresar, chang, changd, FerGur1, FerGur2, Fos, KaygorodovPopov1, KaygorodovPopov2, Her, Mar64}).
It is obvious that every derivation is a Lie derivation. But the converse is in general not true. 
A basic question towards Lie derivations of the associative algebras is that whether they 
can be decomposed into the sum of a derivation and a central-valued mapping, see \cite{Abdu, 
Benkovic, BenkovicEremita, Bresar, chang, changd, Fos, Mar64} and references therein.
In this paper, we will address the structure of Lie derivations without additivity on alternative rings.

Let $\mathfrak{R}$ and $\mathfrak{R}'$ be two rings (not necessarily associative) 
and $\varphi\colon\mathfrak{R}\longrightarrow\mathfrak{R}'$ be a mapping, 
we call $\varphi$ is \textit{additive} if $\varphi(a+b)=\varphi(a)+\varphi(b)$, \textit{almost additive} 
if $\varphi(a+b)-\varphi(a)-\varphi(b)\in\mathcal{Z}(\mathfrak{R})$, 
\textit{multiplicative} if $\varphi(ab)=\varphi(a)\varphi(b)$,  for all $a,b\in\mathfrak{R}$. 
Let $\mathfrak{R}$ be a ring with commutative centre $\mathcal{Z}(\mathfrak{R})$ and $\left[x_1,x_2\right] = x_1x_2-x_2x_1$ 
denote the usual Lie product of $x_1$ and $x_2$.
Let us define the following sequence of polynomials: 
$$p_1(x) = x\, \,  \text{and}\, \,  p_n(x_1, x_2, \ldots , x_n) = [p_{n-1}(x_1, x_2, \ldots , x_{n-1}) , x_n]$$ for all integers $n \geq 2$. 
Thus, $p_2(x_1, x_2) = [x_1, x_2], \  p_3 (x_1, x_2, x_3) = [[x_1, x_2] , x_3]$, etc. Let $n \geq 2$ be an integer. 
A mapping (not necessarily additive) $\D : \R \longrightarrow \R$ is called a \textit{multiplicative Lie n-derivation} if
\begin{eqnarray}\label{ident1}
\D(p_n (x_1, x_2, . . . ,x_n)) = \sum_{i = 1}^{n} p_n (x_1, x_2, . . . , x_{i-1}, \D(x_i), x_{i+1}, . . ., x_n). 
\end{eqnarray}
Lie $n$-derivations were introduced by Abdullaev \cite{Abdu}, where the form of Lie $n$-derivations of a 
certain von Neumann algebra was described. According to the definition, each multiplicative Lie derivation 
is a multiplicative Lie $2$-derivation and each multiplicative Lie triple derivation is a multiplicative Lie $3$-derivation. 
Fo\v sner et al \cite{Fos} showed that every multiplicative Lie $n$-derivation from an associative algebra $\mathcal{A}$ into itself is a 
multiplicative Lie $(n+k(n-1))$-derivation for each $k \in \mathbb{N}_0$. Multiplicative Lie $2$-derivations, Lie $3$-derivations 
and Lie $n$-derivations are collectively referred to as \textit{multiplicative Lie-type derivations}. 

A ring $\mathfrak{R}$ is said to be {\it alternative} if $(x,x,y)=0=(y,x,x)$ for all $x,y\in \mathfrak{R}$, and 
\textit{flexible} is $(x,y,x)=0$ for all $x,y\in\mathfrak{R}$, where $(x,y,z)=(xy)z-x(yz)$ is the associator of $x,y,z\in\mathfrak{R}$. 
It is known that alternative rings are flexible. An alternative ring $\mathfrak{R}$ is called {\it k-torsion free} if $k\,x=0$ 
implies $x=0,$ for any $x\in \mathfrak{R},$ where $k\in{\mathbb Z},\, k>0$, and {\it prime} if $\mathfrak{AB} \neq 0$ 
for any two nonzero ideals $\mathfrak{A},\mathfrak{B}\subseteq \mathfrak{R}$.
The {\it nucleus} $\mathcal{N}(\mathfrak{R})$ and the {\it commutative center} $\mathcal{Z}(\mathfrak{R})$ are defined by:
\begin{center}
$\mathcal{N}(\mathfrak{R})=\{r\in \mathfrak{R}\mid (x,y,r)=0=(x,r,y)=(r,x,y) \hbox{ for all }x,y\in \mathfrak{R}\}$ and 
$\mathcal{Z}(\mathfrak{R})=\{r\in \mathfrak{R} \mid [r, x] = 0 \hbox{ for all }x \in \mathfrak{R}\}$ 
\end{center}

By \cite[Theorem 1.1]{bruth} we have,

\begin{theorem}\label{meu}
Let $\mathfrak{R}$ be a $3$-torsion free alternative ring. So
$\mathfrak{R}$ is a prime ring if and only if $a\mathfrak{R} \cdot b=0$ (or $a \cdot \mathfrak{R}b=0$) 
implies that $a = 0$ or $b =0$ for $a, b \in \mathfrak{R}$. 
\end{theorem}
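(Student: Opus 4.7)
The plan is to prove the two implications separately. The $(\Leftarrow)$ direction is essentially a one-line check: assume the element-theoretic condition holds, and suppose $\mathfrak{A}, \mathfrak{B}$ are nonzero ideals of $\mathfrak{R}$ with $\mathfrak{A}\mathfrak{B} = 0$. Pick nonzero $a \in \mathfrak{A}$ and $b \in \mathfrak{B}$; since $\mathfrak{A}$ is a two-sided ideal, $a\mathfrak{R} \subseteq \mathfrak{A}$, whence $a\mathfrak{R} \cdot b \subseteq \mathfrak{A}\mathfrak{B} = 0$. The hypothesis forces $a=0$ or $b=0$, contradicting the choices, so $\mathfrak{R}$ is prime.

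For the $(\Rightarrow)$ direction I would assume $\mathfrak{R}$ is prime and $a\mathfrak{R}\cdot b = 0$, and aim to show $a=0$ or $b=0$. The strategy is to establish $I_a\cdot I_b=0$ for the two-sided ideals $I_a,I_b$ generated by $a,b$ and then invoke primeness. In an alternative ring $I_a$ is considerably larger than the associative analogue $\mathbb{Z}a+\mathfrak{R}a+a\mathfrak{R}+\mathfrak{R}(a\mathfrak{R})$: one must also close under the operations induced by the associators $(\,\cdot\,,a,\,\cdot\,)$, $(a,\,\cdot\,,\,\cdot\,)$ and iterates thereof. The natural approach is to substitute carefully chosen elements into the hypothesis $a\mathfrak{R}\cdot b=0$ and invoke the skew-symmetry of the associator in an alternative ring,
\begin{equation*}
(x,y,z)=-(y,x,z)=-(x,z,y),
\end{equation*}
together with the Moufang identities and their linearizations, to verify that elements of the shape $(xa)b$, $\bigl((xa)y\bigr)b$, $\bigl(x(ay)\bigr)b$, $(xa,y,b)$, and so on, all vanish. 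The $3$-torsion freeness enters here, because the linearizations of $(x,x,y)=0$ (and their iterates) produce identities scaled by small integers that must be cancelled to isolate the desired vanishings.

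The main obstacle is controlling the non-associative growth of $I_a$: each fresh multiplication or associator produces a nested monomial that is not immediately covered by the hypothesis, and one must systematically reduce it back. The decisive tool is the combination of the alternating character of the associator with the Moufang identities, which lets one collapse nested monomials into iterated products of the form $a\mathfrak{R}\cdot b$ with coefficients in $\mathfrak{R}$, each of which vanishes by assumption. Once $I_a\cdot b = 0$ has been established, a symmetric argument, using either the parallel hypothesis $a\cdot\mathfrak{R}b=0$ or the left--right duality of ideal-closure, upgrades this to $I_a\cdot I_b=0$; primeness of $\mathfrak{R}$ then yields $I_a=0$ or $I_b=0$, i.e.\ $a=0$ or $b=0$, completing the proof.
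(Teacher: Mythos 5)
First, a remark on the comparison itself: the paper offers no proof of Theorem \ref{meu}; it is quoted from \cite{bruth}. So your argument can only be judged on its own terms, and on those terms the $(\Leftarrow)$ direction is correct and complete: $a\mathfrak{R}\subseteq\mathfrak{A}$ for an ideal $\mathfrak{A}$ containing $a$, so $a\mathfrak{R}\cdot b\subseteq\mathfrak{A}\mathfrak{B}=0$ and the element condition gives the contradiction.

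The $(\Rightarrow)$ direction, however, is a plan rather than a proof, and the plan has a structural flaw. The lesser issue is that everything that makes this statement nontrivial for alternative rings is deferred: you assert that skew-symmetry of the associator and the Moufang identities ``collapse'' the monomials generating $I_a$ onto expressions of the form $(ar)b$, but no such reduction is actually carried out, and the place where $3$-torsion-freeness must enter is never exhibited. The decisive issue is that your intermediate target $I_a\cdot I_b=0$ is not derivable from the hypothesis before primeness is invoked: since $a\in I_a$ and $b\in I_b$, it would in particular require $ab=0$, and $a\mathfrak{R}\cdot b=0$ does not imply $ab=0$ in a ring without identity (already in the associative case: in the ring of strictly upper triangular $3\times 3$ matrices, $e_{12}\mathfrak{R}e_{23}=0$ while $e_{12}e_{23}\neq 0$). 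Any correct proof must therefore work with ideals that do not contain $a$ and $b$ themselves --- such as those generated by $a\mathfrak{R}$ or $\mathfrak{R}a\mathfrak{R}$ --- show that a product of two such ideals vanishes, apply primeness to conclude one of them is zero, and then dispose of the degenerate case (say $\mathfrak{R}a\mathfrak{R}=0$ with $a\neq 0$) by producing a nonzero ideal of square zero, impossible in a prime ring. Your outline omits both the degenerate case and the restructuring that makes it unavoidable, so the forward implication is not established.
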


A nonzero element $e_{1}\in \mathfrak{R}$ is called an {\it idempotent} if $e_1^2=e_1$ and the idempotent $e_1$ is 
a {\it nontrivial idempotent} if $e_1$ is not the multiplicative identity element of $\mathfrak{R}$. Let us consider 
$\mathfrak{R}$ an alternative ring and fix a nontrivial idempotent $e_{1}\in\mathfrak{R}$. Let \mbox{$e_2 \colon\mathfrak{R}\rightarrow\mathfrak{R}$} 
and $e'_2 \colon\mathfrak{R}\rightarrow\mathfrak{R}$ be linear operators given by $e_2(a)=a-e_1a$ and $e_2'(a)=a-ae_1$. 
Clearly, $e_2^2=e_2\circ e_2=e_2,$ $(e_2')^2=e_2'$. Note that if $\mathfrak{R}$ has a unity, then 
$e_2=1-e_1\in \mathfrak{R}$. Let us denote $e_2(a)$ by $e_2a$ and $e_2'(a)$ by $ae_2$. It is easy to 
see that $e_ia\cdot e_j=e_i\cdot ae_j~(i,j=1,2)$ for all $a\in \mathfrak{R}$. By \cite{He} we know that 
$\mathfrak{R}$ has a Peirce decomposition
$$\mathfrak{R}=\mathfrak{R}_{11}\oplus \mathfrak{R}_{12}\oplus
\mathfrak{R}_{21}\oplus \mathfrak{R}_{22},$$ where
$\mathfrak{R}_{ij}=e_{i}\mathfrak{R}e_{j}$ $(i,j=1,2)$, satisfying the following multiplicative relations:
\begin{enumerate}\label{asquatro}
\item [\it {\rm (i)}] $\mathfrak{R}_{ij}\mathfrak{R}_{jl}\subseteq\mathfrak{R}_{il}\
(i,j,l=1,2);$
\item [\it {\rm(ii)}] $\mathfrak{R}_{ij}\mathfrak{R}_{ij}\subseteq \mathfrak{R}_{ji}\
(i,j=1,2);$
\item [\it {\rm(iii)}] $\mathfrak{R}_{ij}\mathfrak{R}_{kl}=0,$ if $j\neq k$ and
$(i,j)\neq (k,l),\ (i,j,k,l=1,2);$
\item [\it {\rm(iv)}] $x_{ij}^{2}=0,$ for all $x_{ij}\in \mathfrak{R}_{ij}\ (i,j=1,2;~i\neq j).$
\end{enumerate}

The first result about the additivity of mappings on rings was given by Martindale III in \cite{Mar69}, 
he established a condition on a ring $\mathfrak{R}$ such that every multiplicative isomorphism on $\mathfrak{R}$ 
is additive. In \cite{chang, changd}, Li and his coauthors also considered the 
almost additivity of maps for the case of Lie multiplicative mappings and Lie $3$-derivation on 
associative rings. They proved 

\begin{theorem}\label{cha}
Let $\mathfrak{R}$ be an associative ring containing a nontrivial idempotent $e_1$ and satisfying the
following condition: $(\mathbb{Q})$ If $A_{11}B_{12} = B_{12}A_{22}$ for all $B_{12} \in \mathfrak{R}_{12}$, then $A_{11} + A_{22} \in \mathcal{Z}(\mathfrak{R})$. Let $\mathfrak{R}'$ be another ring. Suppose that a bijection map $\Phi\colon \mathfrak{R} \rightarrow \mathfrak{R}'$ satisfies
$$\Phi([A,B]) = [\Phi(A),\Phi(B)]$$
for all $A,B \in \mathfrak{R}$. Then $\Phi(A + B) = \Phi(A) + \Phi(B) + Z_{A,B}'$ for all $A,B \in \mathfrak{R}$, where $Z_{A,B}'$ is an element in the commutative centre $\mathcal{Z}(\mathfrak{R}')$ of $\mathfrak{R}'$ depending on $A$ and $B$.
\end{theorem}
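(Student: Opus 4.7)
The plan is a Martindale-style Peirce decomposition argument adapted to Lie products. First, plugging $A=B=0$ into $\Phi([A,B])=[\Phi(A),\Phi(B)]$ gives $\Phi(0)=[\Phi(0),\Phi(0)]=0$. Next, for fixed $A,B\in\mathfrak{R}$, use bijectivity of $\Phi$ to choose $T\in\mathfrak{R}$ with $\Phi(T)=\Phi(A)+\Phi(B)$; Lie-multiplicativity then yields
\[
\Phi([T,X])=\Phi([A,X])+\Phi([B,X])\quad\text{for all }X\in\mathfrak{R},
\]
the fundamental identity driving everything that follows. The goal is to show that $T-A-B$ is central in $\mathfrak{R}$, and then to transfer this to $Z'_{A,B}:=\Phi(A+B)-\Phi(A)-\Phi(B)\in\mathcal{Z}(\mathfrak{R}')$.

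I would then decompose $A$ and $B$ via the Peirce splitting $\mathfrak{R}=\bigoplus_{i,j=1}^{2}\mathfrak{R}_{ij}$ associated with $e_1$ and verify the centrality statement component by component. For off-diagonal inputs $A_{12},B_{12}\in\mathfrak{R}_{12}$ (and analogously $\mathfrak{R}_{21}$), specialising $X$ to $e_1$, $e_2$, and arbitrary off-diagonal elements in the fundamental identity, and using the Peirce multiplication rules, pins the Peirce entries of $T$ to those of $A_{12}+B_{12}$ up to a central correction. For diagonal inputs $A_{ii},B_{ii}\in\mathfrak{R}_{ii}$, bracket $T$ against an arbitrary $X_{12}\in\mathfrak{R}_{12}$ and compare Peirce components to extract an associative sandwich identity $C_{11}X_{12}=X_{12}C_{22}$ with $C=T-A_{ii}-B_{ii}$; hypothesis $(\mathbb{Q})$ is precisely tailored to force $C\in\mathcal{Z}(\mathfrak{R})$. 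Mixed sums spreading across several Peirce components are then handled by iterating the component-wise cases, the central corrections accumulating inside $\mathcal{Z}(\mathfrak{R})$.

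The principal obstacle is the diagonal step, where condition $(\mathbb{Q})$ has to be engaged. Lie-multiplicativity only yields bracket equations, whereas $(\mathbb{Q})$ is phrased in associative sandwich form, so one must choose test elements and bracketings delicately so that cross Peirce terms cancel and the targeted associative identity emerges cleanly. A second, more technical point is the final transfer from centrality in $\mathfrak{R}$ to centrality in $\mathfrak{R}'$: the $\Phi$-image of any central element of $\mathfrak{R}$ lies in $\mathcal{Z}(\mathfrak{R}')$ by Lie-multiplicativity together with surjectivity of $\Phi$, but combining this observation with $\Phi(T)=\Phi(A)+\Phi(B)$ to conclude $Z'_{A,B}\in\mathcal{Z}(\mathfrak{R}')$ requires an extra step showing that the discrepancy $\Phi(A+B)-\Phi(T)$ is itself central, which again uses the fundamental identity above and the fact that $[T,X]=[A+B,X]$ once $T-A-B$ is central.
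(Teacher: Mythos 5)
First, a point of orientation: the paper does not actually prove Theorem \ref{cha} --- it is quoted from Li and Chen \cite{chang} as background. The closest thing to ``the paper's own proof'' is the argument for Theorem \ref{mainthmd}, the multiplicative Lie-$n$-derivation analogue, which is built from exactly the Peirce-decomposition ladder you describe (Lemmas \ref{lema4d}--\ref{lema8d}). So your overall architecture --- Martindale's scheme with $T=\Phi^{-1}(\Phi(A)+\Phi(B))$, centrality of $T-A-B$ tested against $\mathfrak{R}_{12}$ so that condition $(\mathbb{Q})$ can be applied, and the final transfer of centrality to $\mathfrak{R}'$ via surjectivity --- is the right one and matches both the cited source and the paper's own Section 2.

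The genuine gap is in the middle step. Your fundamental identity $\Phi([T,X])=\Phi([A,X])+\Phi([B,X])$ is an identity in $\mathfrak{R}'$, and by itself it says nothing about $[T,X]$ as an element of $\mathfrak{R}$. To conclude $[T,X]=[A+B,X]$ you must first rewrite the right-hand side as $\Phi([A,X]+[B,X])$ and then invoke injectivity --- and that rewriting is precisely an instance of the additivity you are trying to prove, applied to the pair $([A,X],[B,X])$. As written, ``specialising $X$ \dots\ pins the Peirce entries of $T$'' is therefore circular. The escape, both in \cite{chang} and in the paper's Lemmas \ref{lema4d}--\ref{lema8d}, is a strictly ordered bootstrap: the base cases are obtained not from the fundamental identity alone but by realizing particular sums as a \emph{single} commutator (the $n=2$ analogue of the identity $(-1)^{n+1}a_{12}+b_{21}=p_n(e_1+a_{12},e_1-b_{21},e_1,\dots,e_1)$ used in Lemma \ref{lema5d}), expanding that one commutator, and only then feeding each newly established special case into the fundamental identity for the next case (first $\mathfrak{R}_{11}+\mathfrak{R}_{12}$, then $\mathfrak{R}_{12}+\mathfrak{R}_{21}$, then $\mathfrak{R}_{ij}+\mathfrak{R}_{ij}$, then $\mathfrak{R}_{ii}+\mathfrak{R}_{ii}$, then the full Peirce sum). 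Your sketch names the cases but not this dependency, and without it the ``iteration over mixed sums'' does not get off the ground. By contrast, the final transfer step you flag as delicate is the unproblematic part: once $[T-(A+B),X]=0$ for all $X$, applying $\Phi$ to $[T,X]=[A+B,X]$ and using surjectivity immediately places $\Phi(A+B)-\Phi(A)-\Phi(B)$ in $\mathcal{Z}(\mathfrak{R}')$.
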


and

\begin{theorem}\label{chad}
Let $\mathfrak{R}$ be an associative ring containing a nontrivial idempotent $e_1$ and satisfying the
following condition: $(\mathbb{Q})$ If $A_{11}B_{12} = B_{12}A_{22}$ for all $B_{12} \in \mathfrak{R}_{12}$, then $A_{11} + A_{22} \in \mathcal{Z}(\mathfrak{R})$. 
Suppose that a mapping $\delta\colon \mathfrak{R} \longrightarrow \mathfrak{R}$ satisfies
$$\delta([[A,B],C]) = [[\delta(A),B],C] + [[A,\delta(B)],C] + [[A,B],\delta(C)]$$
for all $A,B, C \in \mathfrak{R}$. Then there exists a $Z_{A,B}$ {\rm (}depending on $A$ and $B${\rm )} in $\mathcal{Z}(\mathfrak{R})$ such that $\delta (A+B) = \delta(A) + \delta(B) + Z_{A,B}$.
\end{theorem}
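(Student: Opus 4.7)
The plan is to establish that the defect $Z_{A,B} := \delta(A+B) - \delta(A) - \delta(B)$ lies in $\mathcal{Z}(\mathfrak{R})$ for arbitrary $A, B \in \mathfrak{R}$, following the standard Martindale--Li--Lu template: use the Peirce decomposition $\mathfrak{R} = \mathfrak{R}_{11} \oplus \mathfrak{R}_{12} \oplus \mathfrak{R}_{21} \oplus \mathfrak{R}_{22}$ induced by $e_1$ to reduce the problem to a sequence of additivity-modulo-center claims about individual Peirce pieces.

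I would first show $\delta(0) = 0$ by plugging zeros into the hypothesis, and then derive preparatory identities by feeding the Lie triple derivation law test triples whose arguments are chosen from $\{e_1, A, B\}$ for various Peirce components of $A$ and $B$. Using the standard Peirce multiplication rules, each such triple simplifies $[[\cdot,\cdot],\cdot]$ to pick out selected Peirce components, producing a family of linear relations among $\delta(A_{ij})$, $\delta(A)$ and $\delta(e_1)$. With these in hand I would establish, in order:
\begin{enumerate}
\item[(i)] additivity modulo center on each off-diagonal piece $\mathfrak{R}_{12}$ and $\mathfrak{R}_{21}$;
\item[(ii)] additivity modulo center for mixed sums $A_{ii}+A_{ij}$ and $A_{12}+A_{21}$;
\item[(iii)] additivity modulo center on the diagonal pieces $\mathfrak{R}_{11}$ and $\mathfrak{R}_{22}$;
\item[(iv)] the general statement, by writing $A=\sum A_{ij}$, $B=\sum B_{ij}$ and iterating (i)--(iii).
\end{enumerate}

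Claims (i) and (ii) should be reasonably direct, since bracketing with $e_1$ detects the off-diagonal components via $[e_1, A_{12}] = A_{12}$ and $[e_1, A_{21}] = -A_{21}$. Claim (iii) is the crux: when $A_{11}, B_{11}\in\mathfrak{R}_{11}$ we have $[e_1, A_{11}] = 0$, so the triple identity with $e_1$ alone yields no direct information about the diagonal defect. To get around this I would test the defect $T := Z_{A_{11},B_{11}}$ against arbitrary off-diagonal $X_{12}$, apply the triple-derivation identity to expressions like $[[A_{11}+B_{11}, X_{12}], Y_{21}]$ and similar choices, and manipulate the resulting equations to arrive at $T_{11}X_{12} = X_{12}T_{22}$ for every $X_{12}\in\mathfrak{R}_{12}$. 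Condition $(\mathbb{Q})$ is designed exactly for this situation: it converts such uniform commutation identities into the conclusion $T_{11}+T_{22}\in\mathcal{Z}(\mathfrak{R})$. Combined with (i), (ii), and the analogous $\mathfrak{R}_{21}$ argument, this delivers $Z_{A,B}\in\mathcal{Z}(\mathfrak{R})$.

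The main obstacle is therefore step (iii): orchestrating the Lie triple derivation identity across enough test elements to isolate the diagonal defect in precisely the form required by $(\mathbb{Q})$, while carefully tracking the central corrections that accumulate from earlier steps. Once this is in place, step (iv) reduces to routine Peirce bookkeeping.
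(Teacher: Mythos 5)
Your plan is essentially the paper's own approach: although Theorem \ref{chad} is quoted from Li et al.\ rather than proved here, the paper's generalization (Theorem \ref{mainthmd}, via Lemmas \ref{lema4d}--\ref{lema8d}) follows exactly your outline --- Peirce decomposition, additivity modulo the center on off-diagonal, mixed, and then diagonal pieces, with the diagonal crux resolved by testing the defect against arbitrary $X_{12}$ and invoking condition $(\mathbb{Q})$, which in the associative Peirce setting is precisely $[T_{11}+T_{22},X_{12}]=T_{11}X_{12}-X_{12}T_{22}=0$. The only mild understatement is calling step (iv) ``routine bookkeeping'': the four-component sum $A_{11}+A_{12}+A_{21}+A_{22}$ needs its own lemma (the analogue of Lemma \ref{lema8d}) proved by the same testing technique, not by iterating (i)--(iii) alone, but this is the identical method and not a gap.
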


In \cite{FerGur1}, Ferreira and Guzzo investigated the additivity of Lie triple derivations. They obtained the
following result. 

\begin{theorem}\label{Fegu1}
Let $\R$ be an alternative ring. Suppose that $\R$ is a ring
containing a nontrivial idempotent $e_1$ which satisfies
\begin{enumerate}
\item[\it {\rm (i)}] If $[a_{11}+ a_{22}, \mathfrak{R}_{12}] = 0$, then $a_{11} + a_{22} \in \mathcal{Z}(\mathfrak{R}),$
\item[\it {\rm (ii)}] If $[a_{11}+ a_{22}, \mathfrak{R}_{21}] = 0$, then $a_{11} + a_{22} \in \mathcal{Z}(\mathfrak{R}).$
\end{enumerate} 
Then each multiplicative Lie triple derivation $\D$ of $\R$ into itself is
almost additive.
\end{theorem}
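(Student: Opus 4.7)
The plan is to establish that $T_{a,b} := \D(a+b) - \D(a) - \D(b) \in \mathcal{Z}(\R)$ for arbitrary $a, b \in \R$. The natural strategy, following the Martindale--type scheme, is to exploit the Peirce decomposition $\R = \R_{11} \oplus \R_{12} \oplus \R_{21} \oplus \R_{22}$ induced by $e_1$, reduce the question to a finite list of cases in which the summands are Peirce components, and then glue the pieces by iterated application of the intermediate claims. The two hypotheses $(i)$ and $(ii)$ are tailored precisely to detect centrality via commutators with $\R_{12}$ or $\R_{21}$, so they will be invoked each time a candidate for a central defect must be certified.

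First I would verify that $\D(0) = 0$, which is immediate from plugging $(0,0,0)$ into the Lie triple derivation identity. Then I would prove, in order: (A) additivity up to center on a single Peirce component, i.e.\ $\D(x_{ij} + y_{ij}) - \D(x_{ij}) - \D(y_{ij}) \in \mathcal{Z}(\R)$; (B) additivity up to center across distinct Peirce components $x_{ij} + y_{kl}$ with $(i,j) \neq (k,l)$; and finally (C) the full almost additivity for arbitrary $a, b \in \R$ by decomposing both into their Peirce parts and combining (A) and (B) finitely many times. Each of (A)--(B) would be proved by applying $\D$ to a carefully chosen triple commutator $[[u, v], w]$ where the sum to be split appears in one slot and the remaining slots are chosen from $\{e_1, e_2, r_{12}, r_{21}\}$ to activate the Peirce multiplication rules (i)--(iv). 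Expanding both sides and subtracting, the defect gets squeezed into a position where $[T_{a,b}, \R_{12}] = 0$ or $[T_{a,b}, \R_{21}] = 0$ can be read off, at which point hypothesis $(i)$ or $(ii)$ delivers $T_{a,b} \in \mathcal{Z}(\R)$.

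I expect two chief obstacles. First, because $\R$ is only alternative, not associative, the Jacobi identity and standard bracket rearrangements fail in general, so every expansion must be paired with associator-aware Peirce calculations; in particular the identities $x_{ij}^2 = 0$ and $\R_{ij}\R_{ij} \subseteq \R_{ji}$ become the workhorses in place of the associative Peirce table, and each time a product of three elements is reparenthesized one must absorb an associator term by the alternative laws $(x,x,y) = 0 = (y,x,x)$. Second, the subcase where both summands live in a diagonal component $\R_{ii}$ is the most delicate: since $[\R_{ii},\R_{ii}] \subseteq \R_{ii}$, one cannot immediately extract cross-components via Peirce annihilation, and instead must transfer information to the off-diagonal positions by multiplying through by auxiliary elements of $\R_{12} \cup \R_{21}$ before hypotheses $(i)$ or $(ii)$ can be applied. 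Navigating both subtleties simultaneously, while keeping track of which intermediate errors are genuinely central and which are merely commuting with one Peirce slice, is where the bulk of the technical work will lie.
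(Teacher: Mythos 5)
Your strategy is the one the paper itself follows: fix the Peirce decomposition attached to $e_1$, prove splitting lemmas case by case by applying $\D$ to triple commutators whose remaining slots are drawn from $\{e_1,e_2,r_{12},r_{21}\}$, and certify each candidate defect as central via hypotheses (i) and (ii). The cases you isolate, and the two difficulties you flag, are exactly the right ones; in particular your remark that the diagonal case $a_{ii}+b_{ii}$ can only be resolved after commutating against $\R_{12}$ or $\R_{21}$ matches the paper's treatment, and the identities $x_{ij}^2=0$, $\R_{ij}\R_{ij}\subseteq\R_{ji}$ are indeed what make the $a_{ij}+b_{ij}$ case work. (One ordering constraint you should make explicit: the case $\R_{11}+\R_{12}$, resp. $\R_{11}+\R_{21}$, must be settled first, because the commutators used for the remaining cases involve arguments such as $e_1+a_{12}$ and $e_1-b_{21}$, and one needs to split $\D(e_1+a_{12})$ before anything else can be expanded.)

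There is, however, a genuine gap in step (C). "Combining (A) and (B) finitely many times" does not yield the general case: for arbitrary $a,b$ one must evaluate $\D\bigl((a_{11}+b_{11})+(a_{12}+b_{12})+(a_{21}+b_{21})+(a_{22}+b_{22})\bigr)$, which is $\D$ applied to a sum of \emph{four} terms lying in four distinct Peirce components, whereas (A) and (B) only govern sums of \emph{two} terms each confined to a single component. After one combination the partial sum $c_{11}+c_{12}$ is no longer a single Peirce component, so neither (A) nor (B) applies to the pair $(c_{11}+c_{12},\,c_{21}+c_{22})$, and no iteration closes. You need an additional lemma, proved from scratch by the same commutator technique rather than deduced from (A)--(B): setting $t=\D(c_{11}+c_{12}+c_{21}+c_{22})-\sum_{i,j}\D(c_{ij})$, first compute $p_3(t,e_1,e_1)=0$ (using the already-established splitting of $\D$ on $\R_{12}+\R_{21}$) to force $t_{12}=t_{21}=0$, then compute $p_3(t,x_{12},e_1)=0$ for all $x_{12}\in\R_{12}$ (using the splitting on single components to expand $\D$ of the resulting products) and invoke hypothesis (i) to conclude $t=t_{11}+t_{22}\in\mathcal{Z}(\R)$. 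With this four-component lemma in hand, the final assembly works exactly as you describe: apply it once to split $\D(a+b)$ into the four mixed pieces, apply the two-term lemmas to each piece, and apply it twice more to reassemble $\D(a)$ and $\D(b)$.
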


In a recent paper, Ferreira and Guzzo study the characterization of Lie $2$-derivation 
on alternative rings, see \cite{FerGur2}. They showed that

\begin{theorem} \label{Fegu2}
Let $\R$ be a unital $2$,$3$-torsion free alternative ring with nontrivial idempotents $e_1$, $e_2$ and with associated 
Peirce decomposition $\R = \R_{11} \oplus \R_{12} \oplus \R_{21} \oplus \R_{22}$. Suppose that $\mathfrak{R}$ satisfies the following conditions:
\begin{enumerate}
\item [{\rm (1)}] If $x_{ij}\R_{ji} = 0$, then $x_{ij} = 0$ ($i \neq j$);
\item [{\rm (2)}] If $x_{11}\R_{12} = 0$ or $\R_{21}x_{11} = 0$, then $x_{11} = 0$;
\item [{\rm (3)}] If $\R_{12}x_{22} = 0$ or $x_{22}\R_{21} = 0$, then $x_{22} = 0$;
\item [{\rm (4)}] If $z \in \mathcal{Z}(\mathfrak{R})$ with $z \neq 0$, then $z\R = \R$.
\end{enumerate}
Let $\D \colon  \R \longrightarrow \R$ be a multiplicative Lie derivation of $\mathfrak{R}$. 
Then $\D$ is the form $\delta + \tau$, where $\delta$ is an additive derivation of $\R$ and $\tau$ is a mapping from 
$\R$ into the commutative centre $\mathcal{Z}(\mathfrak{R})$,  which maps commutators into the zero if and only if
\begin{enumerate}
\item[(a)] $e_2\D(\R_{11})e_2 \subseteq \mathcal{Z}(\R) e_2,$
\item[(b)] $e_1\D(\R_{22})e_1 \subseteq \mathcal{Z}(\R) e_1.$
\end{enumerate}
\end{theorem}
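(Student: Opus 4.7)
My approach is to prove the equivalence, handling the two directions separately; the sufficiency is the substantive half.

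For necessity, assume $\D=\delta+\tau$ with $\delta$ an additive derivation and $\tau$ central-valued vanishing on commutators. From $\delta(e_1)=\delta(e_1^2)=\delta(e_1)e_1+e_1\delta(e_1)$, a routine Peirce computation (using $2$-torsion-freeness and independence of the Peirce summands) forces $e_1\delta(e_1)e_1=e_2\delta(e_1)e_2=0$, so $\delta(e_1)\in\R_{12}\oplus\R_{21}$. Then for $a_{11}\in\R_{11}$, expanding $\delta(a_{11})=\delta((e_1 a_{11})e_1)$ via the Leibniz rule and projecting onto $\R_{22}$ through the Peirce multiplication relations yields $e_2\delta(a_{11})e_2=0$. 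Hence $e_2\D(a_{11})e_2=e_2\tau(a_{11})e_2=\tau(a_{11})e_2\in\mathcal{Z}(\R)e_2$, proving (a); (b) follows symmetrically.

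For sufficiency, the first step is to establish almost additivity of $\D$. Since every multiplicative Lie derivation is a multiplicative Lie triple derivation, one applies Theorem \ref{Fegu1} (checking that hypotheses (1)--(4) imply its conditions (i), (ii), or else adapting its proof directly to the Lie $2$-setting with the slightly different hypotheses) to obtain $\D(x+y)-\D(x)-\D(y)\in\mathcal{Z}(\R)$ for all $x,y\in\R$. Next I would analyze $\D(e_1)$: applying $\D$ to $[e_1,a_{ij}]$ for each Peirce component (with $[e_1,a_{12}]=a_{12}$, $[e_1,a_{21}]=-a_{21}$, $[e_1,a_{ii}]=0$) pins down the Peirce pieces of $\D(e_1)$ modulo center. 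Setting $T:=e_1\D(e_1)e_2-e_2\D(e_1)e_1\in\R_{12}\oplus\R_{21}$ and considering the modified map $\D':=\D-[T,\cdot]$ (which remains a multiplicative Lie derivation because $T$ lies in off-diagonal Peirce components where the requisite associator identities hold), one arranges $\D'(e_1)\in\R_{11}\oplus\R_{22}$; hypotheses (a), (b), and (4) then permit a further central adjustment making $\D'(e_1)$ central.

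With $\D'$ normalized, define $\tau$ and $\delta$ Peirce-component-by-component. For $a_{ii}\in\R_{ii}$, hypothesis (a) or (b) provides $z_{ii}\in\mathcal{Z}(\R)$ with $e_j\D'(a_{ii})e_j=z_{ii}e_j$ ($j\neq i$); set $\tau(a_{ii}):=z_{ii}$ and $\delta(a_{ii}):=\D'(a_{ii})-z_{ii}$ to kill the wrong-diagonal component. For off-diagonal $a_{ij}$ with $i\neq j$, analyzing $\D'([e_i,a_{ij}])=\D'(a_{ij})$ via the Lie derivation identity shows the diagonal pieces of $\D'(a_{ij})$ are either zero or central, and these are absorbed into $\tau(a_{ij})$ similarly. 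Extend both maps additively using almost additivity, with hypothesis (4) ensuring uniqueness of the central decomposition, and finally reincorporate $[T,\cdot]$ into $\delta$. The main obstacle will be verifying that the resulting $\delta$ is genuinely an additive derivation: the Leibniz rule $\delta(ab)=\delta(a)b+a\delta(b)$ must be checked on every product type $\R_{ij}\cdot\R_{kl}$, which requires thorough case analysis using the Peirce relations (i)--(iv), the alternative and Moufang identities (with $3$-torsion-freeness to absorb coefficients arising from associator manipulations), and the Lie derivation identity applied to cleverly chosen commutators that expose each product, while hypotheses (1)--(3) are invoked repeatedly to conclude that elements annihilating sufficiently many Peirce pieces must vanish. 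Once $\delta$ is established as an additive derivation, $\tau:=\D-\delta$ is central-valued by construction, and $\tau([a,b])=\D([a,b])-\delta([a,b])=[\tau(a),b]+[a,\tau(b)]=0$ since $\tau$ takes values in $\mathcal{Z}(\R)$, completing the verification that $\tau$ kills commutators.
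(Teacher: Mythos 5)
Note first that the paper does not actually prove Theorem \ref{Fegu2}: it is quoted from \cite{FerGur2}, and what the paper proves in Section~3 is its Lie-$n$ generalization, Theorem \ref{mainthm}. Measured against that proof, your necessity argument is essentially the paper's, and your overall plan for sufficiency (almost additivity via the Lie-triple result, normalizing $\D(e_1)$, defining $\delta$ and $\tau$ Peirce-componentwise, then a case-by-case Leibniz check) follows the same skeleton.

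There is, however, a genuine gap in your normalization step. You replace $\D$ by $\D':=\D-[T,\cdot]$ with $T=e_1\D(e_1)e_2-e_2\D(e_1)e_1$ and assert that $\D'$ is still a multiplicative Lie derivation ``because $T$ lies in off-diagonal Peirce components where the requisite associator identities hold.'' This is unjustified and in general false: in an alternative ring the commutator algebra is only a Malcev algebra, and one has $[[T,a],b]+[[a,b],T]+[[b,T],a]=6(T,a,b)$ as well as $[T,ab]-[T,a]b-a[T,b]=-3(T,a,b)$. Since $\R$ is $\{2,3\}$-torsion free, the map $x\mapsto[T,x]$ is a Lie derivation (or a derivation) only when $(T,a,b)=0$ for all $a,b$, i.e.\ when $T\in\mathcal{N}(\R)$, and elements of $\R_{12}\oplus\R_{21}$ need not be nuclear. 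So $\D'$ need not satisfy the Lie derivation identity, the subsequent construction built on $\D'$ does not go through, and the final step of ``reincorporating $[T,\cdot]$ into $\delta$'' would in any case destroy the Leibniz rule for $\delta$. The paper avoids exactly this trap: in Lemma \ref{lema2} it subtracts $f_{y,z}=[L_y,L_z]+[L_y,R_z]+[R_y,R_z]$ with $y=\D(e_1)_{12}+\D(e_1)_{21}$ and $z=e_1$, which is a genuine derivation of any alternative ring (Schafer, \cite{sch}) and agrees with an adjoint map only in the associative case. A secondary remark: the hardest part of sufficiency, the Leibniz rule on $\R_{ii}\R_{ii}$ and especially on $\R_{ij}\R_{ji}$, is only gestured at in your write-up; the latter case requires a triple-commutator identity together with hypothesis (4) to exclude a nonzero central discrepancy, and that is where the substantive work of the proof lies.
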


Inspired by the above-mentioned results, we are planning to extend Theorem \ref{Fegu1} to an arbitrary 
mulitplicative Lie-type derivations in Section $2$. In the Section $3$, we give the characterization of multiplicative Lie-type derivations on alternative rings 
and study the structure of multiplicative Lie-type derivations on alternative rings, which can be considered as 
a natural generalization of Theorem \ref{Fegu2}.
 
\section{Almost Additivity of Multiplicative Lie-type Derivations}

We shall prove as follows the first main result of this paper.

\begin{theorem}\label{mainthmd} 
Let $\mathfrak{R}$ be an alternative ring with nontrivial idempotent $e_1$, 
$\mathcal{Z}(\mathfrak{R})$ be the commutative center of $\mathfrak{R}$ and $\D$ be a 
multiplicative Lie-type derivation of $\R$. Suppose that 
$\mathfrak{R}$ satisfies the following conditions:
\begin{enumerate}
\item[(i)] If $[a_{11}+ a_{22}, \mathfrak{R}_{12}] = 0$, then $a_{11} + a_{22} \in \mathcal{Z}(\mathfrak{R})$, 
\item[(ii)] If $[a_{11}+ a_{22}, \mathfrak{R}_{21}] = 0$, then $a_{11} + a_{22} \in \mathcal{Z}(\mathfrak{R})$.
\end{enumerate} 
Then $\D$ is almost additive.
\end{theorem}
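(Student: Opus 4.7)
The plan is to adapt the strategy of the Lie triple derivation case (Theorem~\ref{Fegu1}) and its associative analogue (Theorem~\ref{chad}) to left-nested $n$-fold commutators $p_n$, working throughout with the Peirce decomposition $\mathfrak{R}=\mathfrak{R}_{11}\oplus\mathfrak{R}_{12}\oplus\mathfrak{R}_{21}\oplus\mathfrak{R}_{22}$ attached to $e_1$. I would establish a sequence of claims that progressively enlarge the class of inputs on which $\D$ is additive modulo $\mathcal{Z}(\R)$: first $\D(0)=0$; then modulo-centre additivity on each off-diagonal component $\mathfrak{R}_{ij}$, $i\neq j$; then on each diagonal block $\mathfrak{R}_{ii}$; then across pairs of distinct Peirce blocks; and finally on arbitrary elements via the Peirce assembly $a+b=\sum_{i,j}(a_{ij}+b_{ij})$, absorbing central corrections at each step. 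Hypotheses~(i) and~(ii) are invoked at the concluding step of each claim, in exactly the same manner as in Theorem~\ref{Fegu1}, to upgrade ``commutes with $\mathfrak{R}_{12}$ or $\mathfrak{R}_{21}$'' to membership in $\mathcal{Z}(\mathfrak{R})$.

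The device that keeps the general $n$ manageable is an idempotent collapse. For $a_{ij}\in\mathfrak{R}_{ij}$ with $i\neq j$, the relations $[e_i,a_{ij}]=a_{ij}$ and $[a_{ij},e_i]=-a_{ij}$ together with the Peirce rules (i)--(iv) give
\begin{equation*}
  p_n\bigl(a_{ij},\underbrace{e_i,e_i,\ldots,e_i}_{n-1}\bigr)=(-1)^{n-1}\,a_{ij},
\end{equation*}
and $\pm a_{ii}b_{ij}\in\mathfrak{R}_{ij}$ is realised analogously as $p_n(a_{ii},b_{ij},e_i,\ldots,e_i)$. Feeding $a_{ij}+b_{ij}$, respectively $a_{ii}+b_{ii}$, into the appropriate $p_n$ and applying identity~(\ref{ident1}) to both sides of the equality
\begin{equation*}
  p_n(a+b,e_i,\ldots,e_i)=p_n(a,e_i,\ldots,e_i)+p_n(b,e_i,\ldots,e_i)
\end{equation*}
yields, after the $n-1$ idempotent slots cause all but a handful of the resulting nested brackets to telescope, a comparison of the same shape as the one used in the triple case. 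The remaining bookkeeping then parallels the proof of Theorem~\ref{Fegu1} essentially verbatim, with the cross-component claim $\D(a_{ij}+a_{kl})-\D(a_{ij})-\D(a_{kl})\in\mathcal{Z}(\mathfrak{R})$ handled either by direct computation when the Peirce product vanishes or by an auxiliary $p_n$-evaluation whose value realises the sum.

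The principal obstacle is the combinatorial explosion produced by~(\ref{ident1}): each application yields $n$ terms, each being itself an $(n-1)$-fold nested commutator with $\D$ inserted in a different slot. Verifying that all but a controlled few of these terms collapse against the $e_i$-padding requires a careful induction on the depth of the commutator together with repeated use of the Peirce multiplication rules. A secondary technical difficulty is that $\R$ is only alternative, so any rearrangement inside an inner commutator must either occur inside the nucleus $\mathcal{N}(\R)$ (which contains $e_1$ and $e_2$) or be carried out using the alternative and flexible identities, with the relevant associators tracked explicitly. Once these two issues are managed, the role of $n$ is absorbed entirely into the idempotent slots that act trivially at every level of nesting, and the argument reduces to the template of Theorem~\ref{Fegu1}.
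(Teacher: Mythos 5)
Your proposal follows essentially the same route as the paper: the same Peirce-block lemma scheme, the same idempotent-padding collapse $p_n(a_{ij},e_i,\dots,e_i)=(-1)^{n-1}a_{ij}$ to tame general $n$ (with the $\D(e_1)$-terms cancelling between the two expansions of identity (\ref{ident1})), and the same use of hypotheses (i)--(ii) to place the discrepancy $t_{11}+t_{22}$ in $\mathcal{Z}(\mathfrak{R})$. The only caveat is that the claims must be taken in the paper's dependency order --- the mixed case $\D(a_{11}+b_{ij})$ has to precede within-block additivity on $\mathfrak{R}_{ij}$, since the latter is extracted from $p_n(e_1+a_{ij},\,e_1-b_{ij},\,e_1,\dots,e_1)$ --- but that is exactly the order of Theorem \ref{Fegu1}, to which you defer, so your plan goes through.
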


As our goal is to generalize the result obtained in \cite{FerGur1}, the following Lemmas are generalizations of Lemmas that appear in \cite{FerGur1}. 
The hypotheses of the following lemmas are the same as the Theorem \ref{mainthmd}.
\vspace{2mm}

It is easy to see that $\D(0)=0$.

\begin{lemma}\label{lema4d} For any $a_{11} \in \R_{11}$, $b_{ij} \in \R_{ij}$, with $i \neq j$ there exist $z_{a_{11}, b_{ij}} \in \mathcal{Z}(\R)$ such
that,
$\D(a_{11} + b_{ij}) = \D(a_{11}) + \D(b_{ij}) + z_{a_{11}, b_{ij}}.$
\end{lemma}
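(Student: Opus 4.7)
The plan is to set $U = a_{11} + b_{ij}$ and $T := \D(U) - \D(a_{11}) - \D(b_{ij})$, decompose $T = T_{11} + T_{12} + T_{21} + T_{22}$ in the Peirce decomposition, and show that $T \in \mathcal{Z}(\R)$ in two stages: first that the off-diagonal parts $T_{12}, T_{21}$ vanish, and then that the diagonal part $T_{11}+T_{22}$ is central, via hypothesis (i) or (ii).

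For the off-diagonal vanishing, the key observation is that $[a_{11}, e_1]=0$, while iterated commutators of $b_{ij}$ with $e_1$ merely change sign. Hence $p_n(a_{11}+b_{ij}, e_1, \ldots, e_1) = p_n(b_{ij}, e_1, \ldots, e_1) = \pm b_{ij}$, and $p_n(a_{11}, e_1, \ldots, e_1) = 0$. We apply the Lie $n$-derivation identity to each of these three identities, exploiting the multilinearity of $p_n$ in its first slot (which allows the auxiliary terms involving $\D(e_1)$ to cancel across the three expansions, even though $\D$ is not yet known to be additive), and conclude that $p_n(T, e_1, \ldots, e_1) = 0$. Direct expansion evaluates the left-hand side to $(-1)^{n-1}T_{12} + T_{21}$; since $\R_{12}$ and $\R_{21}$ are distinct Peirce summands, both components must vanish, so $T = T_{11} + T_{22}$.

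For the diagonal centrality, consider first the case $ij = 12$ (the mirror case $ij = 21$ follows by the same argument with $\R_{12}$ and $\R_{21}$ swapped and hypothesis (i) invoked in place of (ii)). We test against an arbitrary $r_{21} \in \R_{21}$. A direct Peirce computation shows that $p_n(b_{12}, r_{21}, e_1, \ldots, e_1) = 0$ for $n \geq 3$, because $[b_{12}, r_{21}] \in \R_{11} + \R_{22}$ commutes with $e_1$ and is annihilated by the subsequent commutators. On the other hand, $p_n(a_{11}, r_{21}, e_1, \ldots, e_1) = -r_{21}a_{11} \in \R_{21}$ is stable under $[\cdot, e_1]$, so $p_n(U, r_{21}, e_1, \ldots, e_1) = -r_{21}a_{11}$ as well. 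Repeating the multilinearity-and-cancellation argument (now invoking $\D(0) = 0$ for the $b_{12}$-identity), we obtain $p_n(T, r_{21}, e_1, \ldots, e_1) = 0$. Since $T = T_{11} + T_{22}$, this expression evaluates to $T_{22}r_{21} - r_{21}T_{11} = [T_{11}+T_{22}, r_{21}]$, so $[T_{11}+T_{22}, r_{21}] = 0$ for every $r_{21} \in \R_{21}$; hypothesis (ii) then yields $T_{11}+T_{22} \in \mathcal{Z}(\R)$.

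The main obstacle is the delicate bookkeeping of nested commutators within the alternative-ring Peirce calculus, where rules (i)--(iv) must be invoked repeatedly, and the verification that the multilinearity-based cancellations genuinely eliminate the unknown values $\D(e_1)$ and $\D(r_{21})$ without relying on additivity of $\D$. The Lie derivation case $n=2$ is exceptional, because without at least one trailing $e_1$ to annihilate $[b_{12}, r_{21}]$, the reduction $p_n(b_{12}, r_{21}, e_1, \ldots, e_1) = 0$ breaks; that case must be handled by a separate argument in the spirit of Theorem \ref{Fegu2}.
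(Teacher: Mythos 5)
Your proposal follows essentially the same route as the paper's proof: set $t=\D(a_{11}+b_{ij})-\D(a_{11})-\D(b_{ij})$, kill the off-diagonal Peirce components by applying the Lie $n$-derivation identity to $p_n(\,\cdot\,,e_1,\ldots,e_1)$, and then show $[t_{11}+t_{22},c_{21}]=0$ for all $c_{21}\in\R_{21}$ (resp.\ the mirror statement) by testing against $p_n(\,\cdot\,,c_{21},e_1,\ldots,e_1)$ and invoking hypothesis (ii) (resp.\ (i)). Your caveat about $n=2$ is a fair observation rather than a defect of your argument alone, since the paper's own identity $p_n(a_{11}+b_{12},c_{21},e_1,\ldots,e_1)=-c_{21}a_{11}$ likewise needs at least one trailing $e_1$ to absorb $[b_{12},c_{21}]$; apart from that remark the two arguments coincide.
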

\begin{proof}
We only prove the case of $i=1$, $j=2$ because the demonstration of the other cases is rather similar by using the condition (i) of the Theorem \ref{mainthmd}.
Let us set $t = \D(a_{11} + b_{12}) - \D(a_{11}) - \D(b_{12})$.
Then we get $p_n (t, e_1, . . . ,e_1) = 0$, which is due to the fact
$$
\begin{aligned}
\D(p_n (a_{11} + b_{12}, e_1, . . . ,e_1)) &= \D((-1)^{n+1}b_{12}) \\
&= \D(p_n (a_{11}, e_1, . . . ,e_1)) + \D(p_n (b_{12}, e_1, . . . ,e_1)).
\end{aligned}
$$
In view of the definition of $\D$, we have $(-1)^{n+1}t_{12} + t_{21} = 0$. 
Now we will use the condition (ii) of the Theorem \ref{mainthmd}. For any $c_{21} \in \R_{21}$, we know that 
$$
\begin{aligned}
\D(p_n (a_{11} + b_{12}, c_{21},e_1, . . . ,e_1)) &= \D(-c_{21}a_{11}) \\
& = \D(p_n (a_{11}, c_{21},e_1, . . . ,e_1))+ \D(p_n (b_{12}, c_{21},e_1, . . . ,e_1)).
\end{aligned}
$$
Now using the definition of $\D$ and $D(0)=0$, we obtain $[t_{11} + t_{22}, c_{21}] = p_n (t, c_{21},e_1, . . . ,e_1) = 0$. Therefore by condition (ii) of the Theorem \ref{mainthmd} we have $t_{11} + t_{22} \in \mathcal{Z}(\R)$. And hence $\D(a_{11} + b_{12}) = \D(a_{11}) + \D(b_{12}) + z_{a_{11},b_{12}}$.

\end{proof}

\begin{lemma}\label{lema5d}
For any $a_{12} \in \R_{12}$ and $b_{21} \in \R_{21}$, we have $\D(a_{12} + b_{21}) = \D(a_{12}) + \D(b_{21})$.
\end{lemma}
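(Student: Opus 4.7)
The plan is to set $t := \D(a_{12}+b_{21}) - \D(a_{12}) - \D(b_{21})$, write its Peirce decomposition $t = t_{11}+t_{12}+t_{21}+t_{22}$, and show each component vanishes by feeding carefully chosen arguments into the multiplicative Lie-$n$-derivation identity. The strategy mirrors that of Lemma~\ref{lema4d}, but extra work is required since neither $a_{12}$ nor $b_{21}$ separately vanishes under inner commutators with $e_1$.

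First I would show $t_{11}+t_{22}\in\mathcal{Z}(\R)$ and obtain partial commutation of $t_{12}$ and $t_{21}$. For $c_{12}\in\R_{12}$ the bracket $[b_{21},c_{12}]$ lies in $\R_{11}\oplus\R_{22}$, which is killed by further brackets with $e_1$, so (for $n\geq 3$) $p_n(b_{21},c_{12},e_1,\ldots,e_1)=0$ and hence $p_n(a_{12}+b_{21},c_{12},e_1,\ldots,e_1)=p_n(a_{12},c_{12},e_1,\ldots,e_1)$. Applying $\D$, expanding via the Lie-$n$-derivation identity and subtracting yields $p_n(t,c_{12},e_1,\ldots,e_1)=0$; a Peirce expansion produces $[t_{12},c_{12}]=0$ and $t_{11}c_{12}=c_{12}t_{22}$, i.e.\ $[t_{11}+t_{22},c_{12}]=0$, so hypothesis~(i) forces $t_{11}+t_{22}\in\mathcal{Z}(\R)$. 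The mirror argument with $c_{21}\in\R_{21}$ (using $p_n(a_{12},c_{21},e_1,\ldots,e_1)=0$) together with~(ii) gives $[t_{21},c_{21}]=0$.

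Next I would kill the diagonal of $t$. Applying the Lie-$n$-derivation identity to $p_n(a_{12}+b_{21},e_1,\ldots,e_1)=(-1)^{n-1}a_{12}+b_{21}$ and subtracting the analogous identities for $a_{12}$ and $b_{21}$ separately gives
\[
\D\bigl((-1)^{n-1}a_{12}+b_{21}\bigr)-\D\bigl((-1)^{n-1}a_{12}\bigr)-\D(b_{21}) \;=\; (-1)^{n-1}t_{12}+t_{21} \;\in\; \R_{12}\oplus\R_{21}.
\]
Thus, for every $x_{12}\in\R_{12}$ and $y_{21}\in\R_{21}$, the deviation $T(x_{12},y_{21}):=\D(x_{12}+y_{21})-\D(x_{12})-\D(y_{21})$ has trivial diagonal Peirce component (for $n$ even, one exploits closure of $\R_{12}$ under negation to replace $-a_{12}$ by an arbitrary $x_{12}$). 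Specialising back to $(a_{12},b_{21})$ yields $t_{11}+t_{22}=0$, hence $t_{11}=t_{22}=0$.

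It remains to prove $t_{12}=t_{21}=0$. For this I would evaluate $p_n(a_{12}+b_{21},c_{11},e_1,\ldots,e_1)=(-1)^{n-1}c_{11}a_{12}+b_{21}c_{11}$ and its $c_{22}$-analog; the same subtraction manoeuvre then gives
\[
T\bigl((-1)^{n-1}c_{11}a_{12},\,b_{21}c_{11}\bigr) \;=\; (-1)^{n-1}c_{11}t_{12}+t_{21}c_{11},
\]
with a parallel relation producing $t_{12}c_{22}$ and $c_{22}t_{21}$ on the right. Re-applying the first paragraph's analysis to these new deviations produces commutation of $t_{12}$ (resp.\ $t_{21}$) with every Peirce component of $\R$; hypotheses~(i)--(ii) then force $t_{12},t_{21}\in\mathcal{Z}(\R)$, and since $\mathcal{Z}(\R)\cap(\R_{12}\oplus\R_{21})=\{0\}$ we conclude $t_{12}=t_{21}=0$, completing the proof. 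The hard part will be this last step: the alternative (non-associative) structure forces careful bookkeeping of associators in triple products such as $c_{11}t_{12}c_{12}$ and $t_{21}c_{11}c_{21}$, where the middle Peirce indices do not match, and only by judiciously combining several $p_n$-identities do the partial commutations of the first paragraph promote to full centrality of $t_{12}$ and $t_{21}$.
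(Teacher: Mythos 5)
Your opening two reductions are essentially sound: the three-way subtraction applied to $p_n(a_{12}+b_{21},e_1,\ldots,e_1)=(-1)^{n-1}a_{12}+b_{21}$ does show that every deviation $T(x_{12},y_{21})$ has vanishing diagonal, whence $t_{11}=t_{22}=0$, and the identities $[t_{12},c_{12}]=0$ and $[t_{21},c_{21}]=0$ follow as you describe (for $n\ge 3$). The gap is the final step, and it is a real one. Hypotheses (i) and (ii) of Theorem \ref{mainthmd} apply \emph{only} to elements of the form $a_{11}+a_{22}$: they give no criterion under which an element of $\R_{12}$ or $\R_{21}$ commuting with various Peirce components must be central, so the assertion that ``hypotheses (i)--(ii) then force $t_{12},t_{21}\in\mathcal{Z}(\R)$'' is not a legitimate use of the hypotheses. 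Moreover, the auxiliary identities you propose are circular: a relation such as $T\bigl((-1)^{n-1}c_{11}a_{12},\,b_{21}c_{11}\bigr)=(-1)^{n-1}c_{11}t_{12}+t_{21}c_{11}$ has on its left a deviation $T(x_{12},y_{21})$ at a new pair of points --- exactly the quantity the lemma asserts is zero --- and on its right an element already lying in $\R_{12}\oplus\R_{21}$, so the ``zero diagonal'' information from your second paragraph extracts nothing from it. No concrete combination of $p_n$-identities that kills $t_{12}$ and $t_{21}$ is exhibited, and I do not see one available under the stated hypotheses.

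The paper closes the argument by a different device that you should adopt: it writes $(-1)^{n+1}a_{12}+b_{21}=p_n(e_1+a_{12},e_1-b_{21},e_1,\ldots,e_1)$, expands $\D$ of this via the Lie-$n$-derivation identity, and then invokes Lemma \ref{lema4d} to replace $\D(e_1+a_{12})$ by $\D(e_1)+\D(a_{12})+z$ and $\D(e_1-b_{21})$ by $\D(e_1)+\D(-b_{21})+z'$. The central summands $z,z'$ die inside the commutators, and multilinearity of $p_n$ lets one regroup the resulting terms into the expansions of $\D(p_n(e_1,-b_{21},e_1,\ldots,e_1))$, $\D(p_n(a_{12},e_1,\ldots,e_1))$, $\D(p_n(e_1,\ldots,e_1))$ and $\D(p_n(a_{12},-b_{21},e_1,\ldots,e_1))$, the last two of which vanish; this yields $\D((-1)^{n+1}a_{12}+b_{21})=\D((-1)^{n+1}a_{12})+\D(b_{21})$ exactly, with no residual off-diagonal term to dispose of. In short, the missing idea is to realize $a_{12}+b_{21}$ (up to sign) as a single $p_n$-value of arguments whose $\D$-images are already known, via Lemma \ref{lema4d}, to split additively modulo the center; the deviation analysis alone does not close.
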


\begin{proof}
Firstly, observe that $(-1)^{n+1} a_{12} + b_{21} = p_n (e_1 + a_{12}, e_1 - b_{21},e_1, . . . ,e_1)$ for all $a_{12} \in \R_{12}$ and $b_{21} \in \R_{21}$.
By invoking Lemma \ref{lema4d}, we arrive at
$$
\begin{aligned}
\D((-1)^{n+1} a_{12} + b_{21}) &= \D(p_n (e_1 + a_{12}, e_1 - b_{21},e_1, . . . ,e_1)) \\
&=  p_{n}(\D(e_1 + a_{12}), e_1 - b_{21} , e_1, ... , e_1) +p_{n}(e_1 + a_{12}, \D(e_1 - b_{21}) , e_1, ... , e_1) \\
&\ \ \ + \sum_{i=3}^{n} p_{n}(e_1 + a_{12}, e_1 - b_{21} , e_1, ...,\D(e_1), ... , e_1) \\
&= \D(p_{n}(e_1, e_1 , e_1, ... , e_1)) + \D(p_{n}(e_1, -b_{21} , e_1, ... , e_1)) \\
&\ \ \ + \D(p_{n}(a_{12}, e_1 , e_1, ... , e_1)) + \D(p_{n}(a_{12},-b_{21} , e_1, ... , e_1)) \\
&= \D((-1)^{n+1} a_{12}) + \D(b_{21}).  
\end{aligned}
$$
In the case of $n$ is odd, then $\D(-a_{12} + b_{21}) = \D(-a_{12}) + \D(b_{21})$. However, this clearly implies 
that $\D(a_{12} + b_{21}) = \D(a_{12}) + \D(b_{21})$.  
\end{proof}

\begin{lemma}\label{lema6d}
For any $a_{ij}, b_{ij} \in \R_{ij}$ with $i \neq j$, we have $\D(a_{ij} + b_{ij}) = \D(a_{ij}) + \D(b_{ij})$.
\end{lemma}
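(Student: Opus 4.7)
My plan is to realize a shifted form of $a_{ij}+b_{ij}$ as a nested $n$-commutator of simple elements, and then to extract the desired identity by comparing two evaluations of $\D$ on it. I carry out the case $(i,j)=(1,2)$; the case $(2,1)$ is handled symmetrically under $e_1\leftrightarrow e_2$, with hypothesis (ii) replacing (i) when invoking Lemma \ref{lema4d}.

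A routine Peirce computation, using $e_1b_{12}=b_{12}$, $a_{12}e_2=a_{12}$, $e_2a_{12}=b_{12}e_1=0$, and $\R_{12}\R_{12}\subseteq\R_{21}$, shows
\[
p_n(e_1+a_{12},\,e_2+b_{12},\,e_2,\,\ldots,\,e_2) \;=\; (a_{12}+b_{12})\, +\, (-1)^n\bigl(a_{12}b_{12}-b_{12}a_{12}\bigr),
\]
and, by the same rules, evaluates the four building-block commutators as $p_n(e_1,e_2,e_2,\ldots,e_2)=0$, $p_n(e_1,b_{12},e_2,\ldots,e_2)=b_{12}$, $p_n(a_{12},e_2,e_2,\ldots,e_2)=a_{12}$, and $p_n(a_{12},b_{12},e_2,\ldots,e_2)=(-1)^n(a_{12}b_{12}-b_{12}a_{12})$.

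I then apply $\D$ to both sides of the main identity. On the left, since $a_{12}+b_{12}\in\R_{12}$ and the correction $(-1)^n(a_{12}b_{12}-b_{12}a_{12})\in\R_{21}$, Lemma \ref{lema5d} splits the value as $\D(a_{12}+b_{12})+\D\bigl((-1)^n(a_{12}b_{12}-b_{12}a_{12})\bigr)$. On the right, the multiplicative Lie $n$-derivation rule lets $\D$ pass through each of the $n$ slots of $p_n$. Lemma \ref{lema4d} (together with its $\R_{22}$-symmetric counterpart) gives $\D(e_1+a_{12})=\D(e_1)+\D(a_{12})+z_1$ and $\D(e_2+b_{12})=\D(e_2)+\D(b_{12})+z_2$ with $z_1,z_2\in\mathcal{Z}(\R)$, and a central entry in any slot of a nested commutator $p_n$ annihilates the whole expression (because $[w,z]=0$ for every $w$), so the centrals drop out of the expansion.

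Multilinearity of $p_n$ in each slot now permits me to split $e_1+a_{12}$ in slot $1$ and $e_2+b_{12}$ in slot $2$ throughout the $n$-fold expansion, and after swapping the order of summation the right-hand side reassembles as $\sum_{X_1\in\{e_1,a_{12}\}}\sum_{X_2\in\{e_2,b_{12}\}}\D\bigl(p_n(X_1,X_2,e_2,\ldots,e_2)\bigr)$, which by the four building-block evaluations equals $0+\D(b_{12})+\D(a_{12})+\D\bigl((-1)^n(a_{12}b_{12}-b_{12}a_{12})\bigr)$. Equating with the left-hand evaluation and cancelling the common $\R_{21}$-summand yields $\D(a_{12}+b_{12})=\D(a_{12})+\D(b_{12})$. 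The main obstacle is the bookkeeping in the regrouping step: after discarding the central parts, one must check that the $n$-fold multilinear expansion really does partition into exactly those four packets, each of which is a full $\D$-expansion of a building-block commutator. Once this reduction to $n$-linearity of $p_n$ is in hand, the cancellation of the $\R_{21}$-correction finishes the argument.
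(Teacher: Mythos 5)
Your argument is correct and follows essentially the same route as the paper: realize a shifted copy of $a_{ij}+b_{ij}$ as $p_n$ of idempotent-plus-element entries, expand $\D$ using Lemmas \ref{lema4d} and \ref{lema5d}, regroup by multilinearity into the four building-block packets, and cancel the quadratic correction term lying in $\R_{ji}$. The only deviation is your choice of $e_2+b_{12}$ in the second slot (the paper uses $e_1-b_{12}$, so that Lemma \ref{lema4d} applies verbatim to both slots), which forces you to invoke an $\R_{22}$-symmetric counterpart of Lemma \ref{lema4d} that the paper does not state; it does hold by swapping $e_1\leftrightarrow e_2$ in that lemma's proof, so this is a harmless variation.
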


\begin{proof}
Here we shall only prove the case $i=2$, $j=1$ because the proofs of the other cases are similar.
Note that $x_{ij}^{2}=0,$ for all $x_{ij}\in \mathfrak{R}_{ij}\ (i,j=1,2;~i\neq j)$. Thus we have
$$
a_{21} + b_{21} + 2(-1)^{n+1}a_{21}b_{21} = p_{n}(e_1 + a_{21}, e_1 - b_{21},e_1, ..., e_1).
$$
Now making use of Lemma \ref{lema4d} and \ref{lema5d} we get
$$
\begin{aligned}
\D(a_{21} + b_{21}) + \D(2(-1)^{n+1}a_{21}b_{21}) & = \D(a_{21} + b_{21} +2(-1)^{n+1}a_{21}b_{21}) \\
&= \D(p_{n}(e_1 + a_{21}, e_1 - b_{21},e_1, ..., e_1)) \\
&= p_{n}(\D(e_1 + a_{21}), e_1 - b_{21},e_1, ..., e_1) + p_{n}(e_1 + a_{21}, \D(e_1 - b_{21}),e_1, ..., e_1) \\
& \ \ \ + \sum_{i=3}^{n}p_{n}(e_1 + a_{21}, e_1 - b_{21}, e_1, ..., \D(e_1), ..., e_1) \\
&= p_n(\D(e_1) + \D(a_{21}), e_1 -b_{21},e_1, ... , e_1) \\
&\ \ \ + p_n(e_1 + a_{21}, \D(e_1) + \D(-b_{21}),e_1 ..., e_1) \\
&\ \ \ + \sum_{i=3}^{n}p_{n}(e_1 + a_{21}, e_1 - b_{21},e_1, ..., \D(e_1), ... , e_1) \\
&= \D(p_n(e_1,e_1, ..., e_1))+ \D(p_n(e_1, -b_{21},e_1, ... e_1)) + \D(p_n(a_{21}, e_1, e_1, ... , e_1)) \\
&\ \ \ + \D(p_n(a_{21},-b_{21}, e_1, ... ,e_1)) \\
&= \D(a_{21}) + \D(b_{21}) + \D((-1)^{n+1}2a_{21}b_{21}).
\end{aligned}
$$
For the case $i=1$, $j=2$, we only need to use  
$$(-1)^{n+1}(a_{12} + b_{12}) + 2a_{12}b_{12} = p_n(e_1 + a_{12}, e_1-b_{12}, e_1,..., e_1)
$$
together with Lemma  \ref{lema4d} and \ref{lema5d}.
\end{proof}

\begin{lemma}\label{lema7d}
For any $a_{ii}, b_{ii} \in \R_{ii}$, $i=1,2$, there exists a $z_{a_{ii},b_{ii}} \in \mathcal{Z}(\R)$ such that
$$\D(a_{ii} + b_{ii}) = \D(a_{ii}) + \D(b_{ii}) + z_{a_{ii},b_{ii}}.$$
\end{lemma}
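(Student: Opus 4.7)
The plan is to imitate the strategy of Lemma \ref{lema4d}, i.e.\ set
$t = \D(a_{ii}+b_{ii}) - \D(a_{ii}) - \D(b_{ii})$ and show that each Peirce component of $t$ vanishes except possibly $t_{11}+t_{22}$, which is then forced into $\mathcal{Z}(\R)$ by condition (i) or (ii) of Theorem \ref{mainthmd}. I will write the proof for $i=1$; the case $i=2$ is symmetric and uses condition (ii) in place of (i).

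First I would kill the off-diagonal Peirce parts $t_{12}$ and $t_{21}$. Since $[a_{11}+b_{11},e_1]=0$, we have $p_n(a_{11}+b_{11},e_1,\ldots,e_1)=0$, and likewise for $a_{11}$ and $b_{11}$ individually. Applying $\D$ to each and using multi-linearity of $p_n$ (valid in any ring because the bracket is bi-additive) in the first slot, the three Lie-type derivation expansions cancel on the $\D(e_1)$ terms and yield
\[
p_n(t,e_1,e_1,\ldots,e_1)=0.
\]
Expanding $t=t_{11}+t_{12}+t_{21}+t_{22}$ and using $[t_{ii},e_1]=0$, $[t_{12},e_1]=-t_{12}$, $[t_{21},e_1]=t_{21}$, this collapses to $(-1)^{n-1}t_{12}+t_{21}=0$. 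Since the two summands live in different Peirce components, I conclude $t_{12}=0=t_{21}$.

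Next I force $t_{11}+t_{22}$ into the center. For an arbitrary $c_{12}\in\R_{12}$, a direct computation using the Peirce rules gives
\[
p_n(a_{11}+b_{11},c_{12},e_1,\ldots,e_1)=(-1)^{n}(a_{11}+b_{11})c_{12}
=(-1)^{n}a_{11}c_{12}+(-1)^{n}b_{11}c_{12},
\]
and since both $a_{11}c_{12}$ and $b_{11}c_{12}$ lie in $\R_{12}$, Lemma \ref{lema6d} lets us split $\D$ of this sum. On the other hand, expanding each of
$\D(p_n(a_{11}+b_{11},c_{12},e_1,\ldots,e_1))$, $\D(p_n(a_{11},c_{12},e_1,\ldots,e_1))$, $\D(p_n(b_{11},c_{12},e_1,\ldots,e_1))$
by the Lie-type derivation identity and again invoking multi-linearity of $p_n$ in the first slot, the terms involving $\D(c_{12})$ and $\D(e_1)$ match up across the three expansions. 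Subtracting, I obtain
\[
p_n(t,c_{12},e_1,\ldots,e_1)=0.
\]
With $t_{12}=t_{21}=0$ already in hand, $[t,c_{12}]=t_{11}c_{12}-c_{12}t_{22}\in\R_{12}$, and bracketing $(n-2)$ more times with $e_1$ just multiplies by $(-1)^{n-2}$, so the vanishing gives $[t_{11}+t_{22},c_{12}]=0$ for every $c_{12}\in\R_{12}$. Condition (i) of Theorem \ref{mainthmd} then yields $t_{11}+t_{22}\in\mathcal{Z}(\R)$, and we set $z_{a_{11},b_{11}}:=t$.

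The main technical hurdle is not a single step but the combined bookkeeping: one must verify that after applying the Lie-type identity to three parallel expressions and subtracting, every $\D$-term outside the first slot cancels, which rests on the fact that $p_n$ is distributive in the first argument even though $\D$ is not yet known to be additive. Provided this is handled cleanly, the Peirce expansion is routine and the two centralizing hypotheses of Theorem \ref{mainthmd} do the rest (case $i=2$ repeats the same argument with $c_{21}\in\R_{21}$ and condition (ii)).
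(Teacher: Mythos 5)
Your proposal is correct and follows essentially the same route as the paper: set $t=\D(a_{ii}+b_{ii})-\D(a_{ii})-\D(b_{ii})$, use $p_n(\cdot,e_1,\dots,e_1)$ to kill $t_{12}$ and $t_{21}$, then use Lemma \ref{lema6d} to split $\D$ of the off-diagonal products and deduce $[t_{11}+t_{22},c_{ij}]=0$, finishing with conditions (i)/(ii) of Theorem \ref{mainthmd}. The only cosmetic difference is the slot ordering (the paper writes $p_n(c_{ij},a_{ii},e_1,\dots,e_1)$ where you write $p_n(a_{ii},c_{ij},e_1,\dots,e_1)$), which changes nothing.
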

\begin{proof}
Let us set $t = \D(a_{ii} + b_{ii}) - \D(a_{ii}) - \D(b_{ii})$. On the one hand,
$$
\begin{aligned}
0 &=\D(0) \\
&= \D(p_n(a_{ii} + b_{ii} , e_1, ..., e_1)) \\
&= p_n(\D(a_{ii} + b_{ii}), e_1, ..., e_1) + \sum_{i=2}^{n} p_n(a_{ii}+b_{ii}, e_1, ..., \D(e_1), ..., e_1).
\end{aligned}
$$
On the other hand,
$$
\begin{aligned}
0 &= \D(0) + \D(0) \\
&= \D(p_n(a_{ii}, e_1, ..., e_1)) + \D(p_n(b_{ii}, e_1, ..., e_1)) \\
&= p_n(\D(a_{ii}) + \D(b_{ii}), e_1, ..., e_1) + \sum_{i=2}^{n} p_n(a_{ii}+b_{ii}, e_1, ..., \D(e_1), ..., e_1).
\end{aligned}
$$
This implies that $p_n(t, e_1, ..., e_1) = 0$. That is $t_{12} = t_{21} = 0$.
For any $c_{ij} \in \R_{ij}$, with $i \neq j$, by Lemma \ref{lema6d}, we obtain
$$
\begin{aligned}
\D((-1)^{n+1}(a_{ii} + b_{ii})c_{ij}) &= \D((-1)^{n+1}a_{ii}c_{ij}) + \D((-1)^{n+1}b_{ii}c_{ij}) \\
&= \D(p_n(c_{ij}, a_{ii}, e_1, ..., e_1)) + \D(p_n(c_{ij}, b_{ii}, e_1, ..., e_1)) \\
&= p_n(\D(c_{ij}), a_{ii} + b_{ii}, e_1, ..., e_1) + p_n(c_{ij}, \D(a_{ii}) + \D(b_{ii}), e_1, ..., e_1) \\
&\ \ \ + \sum_{i=3}^{n}p_n(c_{ij}, a_{ii} + b_{ii}, e_1, ..., \D(e_1), ..., e_1).
\end{aligned}
$$
Now we also have,
$$
\begin{aligned}
\D((-1)^{n+1}(a_{ii} + b_{ii})c_{ij}) &= \D(p_n(c_{ij}, a_{ii}+b_{ii}, e_1, ... , e_1)) \\
&= p_n(\D(c_{ij}), a_{ii}+b_{ii}, e_1, ... , e_1) + p_n(c_{ij}, \D(a_{ii}+b_{ii}), e_1, ... , e_1) \\
&\ \ \ + \sum_{i=3}^{n}p_n(c_{ij}, a_{ii}+b_{ii}, e_1, ..., \D(e_1), ... , e_1).
\end{aligned}	
$$
Hence $p_n(c_{ij}, t, e_1, ..., e_1) = 0$. This give $[t_{11} + t_{22}, c_{ij}] = 0$ for all $c_{ij} \in \R_{ij}$ with $i \neq j$. 
By the conditions of Theorem \ref{mainthmd}, we get $t_{11} + t_{22} \in \mathcal{Z}(\mathfrak{R})$. 
Therefore $\D(a_{ii} + b_{ii}) = \D(a_{ii}) + \D(b_{ii}) + z_{a_{ii},b_{ii}}$. 
\end{proof}

\begin{lemma}\label{lema8d}
For any $a_{11} \in \R_{11}$, $b_{12} \in \R_{12}$, $c_{21} \in \R_{21}$, $d_{22} \in \R_{22}$, there exists a $z_{a_{11},b_{12}, c_{21}, d_{22}} \in \mathcal{Z}(\R)$ such that
$$\D(a_{11} + b_{12} + c_{21} + d_{22}) = \D(a_{11}) + \D(b_{12}) + \D(c_{21}) + \D(d_{22}) + z_{a_{11},b_{12}, c_{21}, d_{22}}.$$
\end{lemma}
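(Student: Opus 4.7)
Let $s := a_{11} + b_{12} + c_{21} + d_{22}$ and $t := \D(s) - \D(a_{11}) - \D(b_{12}) - \D(c_{21}) - \D(d_{22})$, and decompose $t = t_{11} + t_{12} + t_{21} + t_{22}$ according to the Peirce decomposition. Mirroring Lemmas~\ref{lema4d} and~\ref{lema7d}, the plan is first to show $t_{12} = t_{21} = 0$ by evaluating $p_n$ with $e_1$ in the last $n-1$ slots, and then to force $t_{11} + t_{22} \in \mathcal{Z}(\R)$ via condition (i) of Theorem~\ref{mainthmd} by evaluating $p_n$ with an arbitrary $f_{12} \in \R_{12}$ in the second slot.

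For the first step, using $[a_{11}, e_1] = [d_{22}, e_1] = 0$, $[b_{12}, e_1] = -b_{12}$ and $[c_{21}, e_1] = c_{21}$, a direct computation yields
\[
p_n(s, e_1, \ldots, e_1) = (-1)^{n+1} b_{12} + c_{21},
\]
together with $p_n(a_{11}, e_1, \ldots, e_1) = 0 = p_n(d_{22}, e_1, \ldots, e_1)$, $p_n(b_{12}, e_1, \ldots, e_1) = (-1)^{n+1} b_{12}$ and $p_n(c_{21}, e_1, \ldots, e_1) = c_{21}$. Lemma~\ref{lema5d} splits $\D((-1)^{n+1} b_{12} + c_{21}) = \D((-1)^{n+1} b_{12}) + \D(c_{21})$, so applying the multiplicative Lie $n$-derivation identity to $p_n(s, e_1, \ldots, e_1)$ on one side, summing it over the four individual subterms $p_n(x, e_1, \ldots, e_1)$ for $x \in \{a_{11}, b_{12}, c_{21}, d_{22}\}$ on the other (the $\D(e_1)$-contributions collapse on both sides by multilinearity of $p_n$), and subtracting, one obtains $p_n(t, e_1, \ldots, e_1) = (-1)^{n+1} t_{12} + t_{21} = 0$. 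The directness of the Peirce sum forces $t_{12} = 0 = t_{21}$.

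For the second step, fix $f_{12} \in \R_{12}$ and compute
\[
p_n(f_{12}, s, e_1, \ldots, e_1) = (-1)^{n+1} (a_{11} f_{12} - f_{12} d_{22}) + (f_{12} b_{12} - b_{12} f_{12}),
\]
whose first summand lies in $\R_{12}$ and second in $\R_{21}$. By Lemmas~\ref{lema5d} and~\ref{lema6d}, $\D$ distributes additively across this mixed sum and across each of its same-Peirce-component pieces. Comparing $\D(p_n(f_{12}, s, e_1, \ldots, e_1))$ with the sum of the four values $\D(p_n(f_{12}, x, e_1, \ldots, e_1))$ for $x \in \{a_{11}, b_{12}, c_{21}, d_{22}\}$ via the Lie $n$-derivation identity --- the $p_n(\D(f_{12}), s, \ldots)$-term and the $\D(e_1)$-terms match and cancel on both sides --- yields $p_n(f_{12}, t, e_1, \ldots, e_1) = 0$. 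Since $t = t_{11} + t_{22}$, this reads $(-1)^n (f_{12} t_{22} - t_{11} f_{12}) = 0$, i.e., $[t_{11} + t_{22}, f_{12}] = 0$ for every $f_{12} \in \R_{12}$. Condition (i) of Theorem~\ref{mainthmd} then delivers $t_{11} + t_{22} \in \mathcal{Z}(\R)$, so $z_{a_{11}, b_{12}, c_{21}, d_{22}} := t$ is the required central element.

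The main obstacle is the bookkeeping in the second step: one must correctly identify the Peirce components of $p_n(f_{12}, s, e_1, \ldots, e_1)$ so that Lemmas~\ref{lema5d} and~\ref{lema6d} apply iteratively in the right order, and one must confirm that the implicit central corrections already produced by Lemmas~\ref{lema4d} and~\ref{lema7d} on individual pieces do not contaminate the computation --- they don't, because they belong to $\mathcal{Z}(\R)$ and are therefore annihilated by the outer brackets of $p_n$.
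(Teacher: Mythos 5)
Your proof is correct and follows essentially the same route as the paper: first kill $t_{12}$ and $t_{21}$ via $p_n(\,\cdot\,,e_1,\ldots,e_1)$ and Lemma \ref{lema5d}, then show $[t_{11}+t_{22},\mathfrak{R}_{12}]=0$ via Lemmas \ref{lema5d} and \ref{lema6d} and invoke condition (i). The only (immaterial) difference is that you place the element of $\mathfrak{R}_{12}$ in the first slot of $p_n$ rather than the second, which changes the computation by a sign.
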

\begin{proof}
Let us write $t = \D(a_{11} + b_{12} + c_{21} + d_{22}) - \D(a_{11}) - \D(b_{12}) - \D(c_{21}) - \D(d_{22})$. 
By the definition of $\D$ and Lemma \ref{lema5d} we know that $p_n(t,e_1, ..., e_1) = 0$. 
Indeed, 
$$
\begin{aligned} 
p_n(t,e_1, ..., e_1)&= p_n(\D(a_{11} + b_{12} + c_{21} + d_{22}) - \D(a_{11}) - \D(b_{12}) - \D(c_{21}) - \D(d_{22}),e_1, ..., e_1)\\
&= p_n(\D(a_{11} + b_{12} + c_{21} + d_{22}),e_1, ..., e_1) - p_n(\D(a_{11}),e_1, ..., e_1) \\
&\ \ \ - p_n(\D(b_{12}),e_1, ..., e_1) - p_n(\D(c_{21}),e_1, ..., e_1) - p_n(\D(d_{22}),e_1, ..., e_1)\\
&= \D(p_n(a_{11} + b_{12} + c_{21} + d_{22},e_1, ..., e_1))\\
&\ \ \ - \sum_{i=2}^{n} p_n(a_{11} + b_{12} + c_{21} + d_{22}, e_1, ..., \D(e_1), ..., e_1)\\
&\ \ \ - \left\{\D(p_n(a_{11},e_1, ..., e_1)) - \sum_{i=2}^{n} p_n(a_{11},e_1,..., \D(e_1), ..., e_1)\right\}\\
&\ \ \ - \left\{\D(p_n(b_{12},e_1, ..., e_1)) - \sum_{i=2}^{n} p_n(b_{12},e_1,..., \D(e_1), ..., e_1)\right\}\\
&\ \ \ - \left\{\D(p_n(c_{21},e_1, ..., e_1)) - \sum_{i=2}^{n} p_n(c_{21},e_1,..., \D(e_1), ..., e_1)\right\}\\
&\ \ \ - \left\{\D(p_n(d_{22},e_1, ..., e_1)) - \sum_{i=2}^{n} p_n(d_{22},e_1,..., \D(e_1), ..., e_1)\right\}\\
&= \D((-1)^{n+1}b_{12} + c_{21}) - \D((-1)^{n+1}b_{12}) - \D(c_{21})\\
&= 0.
\end{aligned}
$$
As $p_n(t,e_1, ..., e_1) = 0$, we conclude that $(-1)^{n+1}t_{12} + t_{21} = 0$.
Now for all $x_{12} \in \R_{12}$, by Lemma \ref{lema5d} and Lemma \ref{lema6d} we get 
$$
\begin{aligned}
&p_n(\D(a_{11} + b_{12} + c_{21} + d_{22}), x_{12},e_1, ..., e_1) + p_n(a_{11} + b_{12} + c_{21} + d_{22}, \D(x_{12}),e_1, ..., e_1) \\
&+ \sum_{i=3}^{n}p_n(a_{11} + b_{12} + c_{21} + d_{22}, x_{12},e_1,..., \D(e_1), ..., e_1) \\
&= \D(p_n(a_{11} + b_{12} + c_{21} + d_{22}, x_{12},e_1, ..., e_1)) \\
&= \D((-1)^{n+1}x_{12}d_{22} +(-1)^{n} a_{11}x_{12} +(-1)^{n} b_{12}x_{12}) \\
&= \D((-1)^{n+1}x_{12}d_{22} +(-1)^{n} a_{11}x_{12}) + \D((-1)^{n}b_{12}x_{12}) \\
&= \D((-1)^{n+1}x_{12}d_{22}) + \D((-1)^{n} a_{11}x_{12}) + \D((-1)^{n}b_{12}x_{12}) \\
&= \D(p_n(a_{11}, x_{12},e_1, ..., e_1)) + \D(p_n(b_{12}, x_{12},e_1, ..., e_1)) \\
&+ \D(p_n(c_{21}, x_{12},e_1, ..., e_1)) + \D(p_n(d_{22}, x_{12},e_1, ..., e_1))\\
&= p_n(\D(a_{11}) + \D(b_{12}) + \D(c_{21}) + \D(d_{22}), x_{12},e_1, ... , e_1) \\
&+ p_n(a_{11} + b_{12} + c_{21} + d_{22}, \D(x_{12}),e_1, ..., e_1) \\
&+ \sum_{i=3}^{n}p_n(a_{11} + b_{12} + c_{21} + d_{22}, x_{12},e_1, ..., \D(e_1), ..., e_1).
\end{aligned}
$$
We therefore have $p_n(\D(a_{11} + b_{12} + c_{21} + d_{22}), x_{12},e_1, ..., e_1)=p_n(\D(a_{11}) + \D(b_{12}) + \D(c_{21}) + \D(d_{22}), x_{12},e_1, ..., e_1)$. 
That is, $[t_{11} + t_{22}, x_{12}]=p_n(t,x_{12},e_1, ..., e_1) = 0$. Applying the condition (i) of Theorem \ref{mainthmd} yields $t = t_{11} + t_{22} \in \mathcal{Z}(\R)$.
Thus $\D(a_{11} + b_{12} + c_{21} + d_{22}) = \D(a_{11}) + \D(b_{12}) + \D(c_{21}) + \D(d_{22}) + z_{a_{11},b_{12}, c_{21}, d_{22}}$, where $z_{a_{11},b_{12}, c_{21}, d_{22}} \in \mathcal{Z}(\R)$.  
\end{proof}

\vspace{2mm}

We are ready to prove our Theorem \ref{mainthmd}.

\vspace{0,5cm}

\noindent {\bf Proof of Theorem 2.1}. Let $a, b \in \R$ with $a= a_{11} + a_{12} + a_{21} + a_{22}$ and $b= b_{11} + b_{12} + b_{21} + b_{22}$. By previous Lemmas we obtain
$$
\begin{aligned}
\D(a + b) &= \D(a_{11} + a_{12} + a_{21} + a_{22} + b_{11} + b_{12} + b_{21} + b_{22})\\
&= \D((a_{11} + b_{11}) + (a_{12} + b_{12}) + (a_{21}+ b_{21}) + (a_{22} + b_{22}))\\
&=\D(a_{11} + b_{11}) + \D(a_{12} + b_{12}) + \D(a_{21}+ b_{21}) + \D(a_{22} + b_{22}) + z_{1}\\
&= \D(a_{11}) +\D( b_{11}) + z_{2} + \D(a_{12}) + \D(b_{12}) + \D(a_{21})+ \D(b_{21}) +\D(a_{22}) + \D(b_{22})\\
&\ \ \  + z_{3} + z_{1}\\
&= (\D(a_{11}) +\D(a_{12}) +\D(a_{21})+\D(a_{22}))+ (\D( b_{11}) + \D( b_{12}) + \D( b_{21}) + \D( b_{22})) \\
&\ \ \ + (z_1 + z_2 + z_3)\\
&= \D(a_{11} + a_{12} + a_{21} + a_{22}) - z_{4} + \D(b_{11} + b_{12} + b_{21} + b_{22}) - z_{5} +(z_1 + z_2 + z_3)\\
&= \D(a) + \D(b) + (z_1 + z_2 + z_3 - z_4 -z_5)\\
&= \D(a) + \D(b) + z_{a,b}.
\end{aligned}
$$
This finishes the proof of Theorem \ref{mainthmd}.

\begin{corollary}
Let $\mathfrak{R}$ be an alternative ring.
Suppose that $\mathfrak{R}$ is a ring containing a nontrivial idempotent $e_1$ which satisfies:
\begin{enumerate}
\item[\it (i)] If $[a_{11}+ a_{22}, \mathfrak{R}_{12}] = 0$, then $a_{11} + a_{22} \in \mathcal{Z}(\mathfrak{R})$,
\item[\it (ii)] If $[a_{11}+ a_{22}, \mathfrak{R}_{21}] = 0$, then $a_{11} + a_{22} \in \mathcal{Z}(\mathfrak{R})$.
\end{enumerate}
Then every Lie $3$-derivation $\D$ of $\mathfrak{R}$ into itself is almost additive.
\end{corollary}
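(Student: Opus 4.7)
The plan is to recognize this corollary as an immediate specialization of Theorem \ref{mainthmd} to the case $n=3$. Recall that by definition $p_3(x_1, x_2, x_3) = [[x_1, x_2], x_3]$, so a multiplicative Lie $3$-derivation is precisely a map $\mathfrak{D}\colon \mathfrak{R}\longrightarrow\mathfrak{R}$ satisfying
\[
\mathfrak{D}([[x_1,x_2],x_3]) = [[\mathfrak{D}(x_1),x_2],x_3] + [[x_1,\mathfrak{D}(x_2)],x_3] + [[x_1,x_2],\mathfrak{D}(x_3)],
\]
which is identity \eqref{ident1} with $n=3$. As noted in the introduction, multiplicative Lie $n$-derivations for any fixed $n\geq 2$ belong to the class of multiplicative Lie-type derivations, and in particular so does every Lie $3$-derivation.

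Since the idempotent hypotheses (i) and (ii) in the corollary are word-for-word the same conditions assumed in Theorem \ref{mainthmd}, I would simply verify that the proof of Theorem \ref{mainthmd}, as carried out through Lemmas \ref{lema4d}--\ref{lema8d}, does not require anything beyond what a Lie $3$-derivation provides, namely, validity of \eqref{ident1} for that specific $n$. A quick inspection shows that every use of the Lie-type derivation identity in the argument is indeed an instance where $n$ is held fixed, so specializing to $n=3$ gives exactly the machinery needed.

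Therefore the proof reduces to the single line: apply Theorem \ref{mainthmd} with $n=3$ to $\mathfrak{D}$, obtaining that $\mathfrak{D}(a+b)-\mathfrak{D}(a)-\mathfrak{D}(b)\in\mathcal{Z}(\mathfrak{R})$ for all $a,b\in\mathfrak{R}$, which is the desired almost additivity. There is no genuine obstacle here, since the hypotheses are identical and the statement is strictly weaker; the corollary is included to record the $n=3$ case explicitly and to recover the Lie triple setting of \cite{FerGur1} (Theorem \ref{Fegu1}) as a consequence of the broader Lie-type framework developed in this paper.
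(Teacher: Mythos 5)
Your proposal is correct and matches the paper's intent exactly: the corollary is stated without proof precisely because it is the immediate specialization of Theorem \ref{mainthmd} to $n=3$, with identical hypotheses (i) and (ii). Nothing further is needed.
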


\begin{corollary}
Let $\mathfrak{R}$ be a $3$-torsion free prime alternative ring. Suppose that $\mathfrak{R}$ is an 
alternative ring containing a nontrivial idempotent $e_1$. Then every Lie $3$-derivation $\D$ of 
$\mathfrak{R}$ into itself is almost additive.
\end{corollary}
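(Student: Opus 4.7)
My plan is to reduce this corollary to the preceding one by showing that a $3$-torsion free prime alternative ring $\R$ with nontrivial idempotent $e_1$ automatically satisfies the two Peirce-centrality hypotheses (i) and (ii) of Theorem \ref{mainthmd}; once that is established, the conclusion follows immediately from the preceding corollary. Thus the entire problem reduces to extracting centrality from primeness via the characterization of prime alternative rings in Theorem \ref{meu}.

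To verify condition (i), fix $a=a_{11}+a_{22}$ with $[a,\R_{12}]=0$; by the Peirce multiplication rules this is equivalent to $a_{11}r_{12}=r_{12}a_{22}$ for every $r_{12}\in\R_{12}$. I would then perform three substitutions, each time invoking the vanishing of the appropriate associators on the Peirce components (which in an alternative ring follows from Artin's theorem on two-generated subalgebras together with the Peirce relations already listed). First, replacing $r_{12}$ by $x_{11}r_{12}\in\R_{12}$ and rearranging yields $[a_{11},x_{11}]\,r_{12}=0$ for every $r_{12}$; then Theorem \ref{meu}, which is available precisely because $\R$ is $3$-torsion free, strips off the auxiliary factor and gives $[a_{11},x_{11}]=0$ for all $x_{11}\in\R_{11}$. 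Second, the substitution $r_{12}\mapsto r_{12}y_{22}\in\R_{12}$ produces $r_{12}\,[a_{22},y_{22}]=0$ for every $r_{12}$, and the same application of Theorem \ref{meu} yields $[a_{22},y_{22}]=0$ for every $y_{22}\in\R_{22}$. Third, forming the Peirce products $r_{12}s_{21}\in\R_{11}$ and $s_{21}r_{12}\in\R_{22}$ for $s_{21}\in\R_{21}$ and pushing $a$ through them via the hypothesis produces an identity which, on another application of Theorem \ref{meu}, forces $a_{22}s_{21}=s_{21}a_{11}$ for every $s_{21}\in\R_{21}$. Assembling these gives $[a,x_{ij}]=0$ for every Peirce component, hence $a\in\mathcal{Z}(\R)$; condition (ii) is proved by an entirely symmetric argument using the right-handed form of Theorem \ref{meu}.

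The main obstacle I anticipate is the non-associative bookkeeping: one has to verify that associators such as $(a_{11},x_{11},r_{12})$, $(x_{11},r_{12},a_{22})$, $(r_{12},s_{21},a_{11})$, and their siblings vanish on the Peirce components appearing in the computations. These associator identities are standard consequences of the alternative identities, flexibility, and Artin's theorem applied to the two-generated subalgebra containing the relevant idempotent, but organising them cleanly is the bulk of the verification. Once they are in hand, each of the three substitutions above is a short manipulation, and the $3$-torsion-free hypothesis enters only through Theorem \ref{meu}, which is the indispensable cancellation tool for removing the auxiliary $\R_{12}$-factor at each stage.
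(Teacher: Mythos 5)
Your proposal follows the same route as the paper: reduce to Theorem \ref{mainthmd} (equivalently, to the preceding corollary with $n=3$) by showing that a $3$-torsion free prime alternative ring with nontrivial idempotent satisfies conditions (i) and (ii). The paper simply cites \cite{FerGur1} for that verification, whereas you sketch it directly from Theorem \ref{meu}; your sketch is consistent with the cited argument, the only caveat being that the associator identities you need (e.g.\ $(\mathfrak{R}_{11},\mathfrak{R}_{11},\mathfrak{R}_{12})=0$) come from the Peirce relations of \cite{He} rather than from Artin's theorem, which only governs two-generated subalgebras.
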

\begin{proof}
In \cite{FerGur1} the authors showed that any prime alternative ring satisfies the conditions of the Theorem \ref{mainthmd}. 
Hence the result holds true for $n=3$.

\end{proof}

\section{Characterization of Lie-type derivations on alternative rings}

In this section, we will characterize multiplicative Lie-type derivations on alternative rings and provide 
an essential structure theorem for multiplicative Lie-type derivations.
Henceforth, let $\R$ be a $\left\{2, 3, (n-1), (n-3)\right\}$-torsion free alternative ring satisfying the following conditions:
\begin{enumerate}\label{identi}
\item [(1)] If $x_{ij}\R_{ji} = 0$, then $x_{ij} = 0$ ($i \neq j$);
\item [(2)] If $x_{11}\R_{12} = 0$ or $\R_{21}x_{11} = 0$, then $x_{11} = 0$;
\item [(3)] If $\R_{12}x_{22} = 0$ or $x_{22}\R_{21} = 0$, then $x_{22} = 0$;
\item [(4)] If $z \in \mathcal{Z}$ with $z \neq 0$, then $z\R = \R$.
\end{enumerate}

We refer the reader to \cite{FerGur2} about the proofs of the following propositions.

\begin{proposition}
Any prime alternative ring satisfies $(1)$, $(2)$, $(3)$.   
\end{proposition}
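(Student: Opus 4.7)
My plan is to exploit Theorem \ref{meu}, which in a $3$-torsion free prime alternative ring characterises primeness via the implication that $a\R\cdot b=0$ (or $a\cdot\R b=0$) implies $a=0$ or $b=0$. For each of (1), (2), (3) the strategy is to rewrite the hypothesis as a product of this form in which one side is known to be nonzero; Theorem \ref{meu} will then force the factor containing $x_{ij}$ to vanish.

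For (1), assume $x_{12}\R_{21}=0$. A short Peirce calculation gives $\R e_1=\R_{11}+\R_{21}$, so $x_{12}\cdot(\R e_1)=x_{12}\R_{11}+x_{12}\R_{21}$; the first summand vanishes by Peirce relation (iii) and the second by hypothesis. Hence $x_{12}\cdot\R e_1=0$ and, since $e_1\neq 0$, Theorem \ref{meu} yields $x_{12}=0$. The case $i=2$, $j=1$ is entirely symmetric (using $e_1\R=\R_{11}+\R_{12}$ instead).

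For (2), assume $x_{11}\R_{12}=0$ and fix a nonzero $y_{12}\in\R_{12}$. For any $r\in\R$, decomposing $r$ into its Peirce components and using relations (i)--(iii) shows $ry_{12}\in\R_{12}+\R_{21}+\R_{22}$, and each of $x_{11}\R_{12}$, $x_{11}\R_{21}$, $x_{11}\R_{22}$ is zero (the first by hypothesis, the other two by Peirce relation (iii)). Thus $x_{11}\cdot\R y_{12}=0$ and Theorem \ref{meu} forces $x_{11}=0$. The dual hypothesis $\R_{21}x_{11}=0$ is handled by the symmetric computation $(y_{21}\R)\cdot x_{11}=0$ with a nonzero $y_{21}\in\R_{21}$, and condition (3) is treated in exactly the same way using $y_{12}\R x_{22}$ and $x_{22}\R y_{21}$.

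The main obstacle I anticipate is the auxiliary fact that $\R_{12}$ and $\R_{21}$ are nonzero in a prime alternative ring containing a nontrivial idempotent (without this, no suitable $y_{12}$ or $y_{21}$ is available for (2) and (3)). I would argue by contradiction: if $\R_{12}=0$, then for any $b\in\R_{22}$ one checks $e_1\R b\subseteq\R_{12}=0$, so Theorem \ref{meu} forces $\R_{22}=0$; a further application with a nonzero $a\in\R_{11}$ and $b\in\R_{21}$ then forces $\R_{21}=0$; but then $\R=\R_{11}$ and $e_1$ acts as a two-sided identity, contradicting the non-triviality of $e_1$. Once this lemma is in place, the Peirce computations above deliver (1)--(3) directly.
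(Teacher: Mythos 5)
Your overall strategy---rewriting each hypothesis as a product of the shape $a\cdot\mathfrak{R}b=0$ or $a\mathfrak{R}\cdot b=0$ and invoking Theorem \ref{meu} together with the Peirce relations---is the right one; the paper itself gives no proof and defers to \cite{FerGur2}, which argues in the same spirit. Your verifications of conditions (2) and (3), of the first half of (1), and of the auxiliary fact that $\mathfrak{R}_{12}\neq 0\neq\mathfrak{R}_{21}$ are all correct, and identifying that auxiliary fact as necessary is a genuine point in your favour. Note only that Theorem \ref{meu} requires $3$-torsion freeness, so strictly you prove the proposition for $3$-torsion free prime alternative rings; this is harmless here, since the standing hypotheses of Section 3 include it.

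The one step that fails as written is the second half of condition (1). From $x_{21}\mathfrak{R}_{12}=0$ you propose to conclude ``symmetrically, using $e_1\mathfrak{R}=\mathfrak{R}_{11}+\mathfrak{R}_{12}$.'' But the set $x_{21}\cdot(e_1\mathfrak{R})=\{x_{21}(e_1r)\}$ has the shape $a\cdot(b\mathfrak{R})$, which is not one of the two shapes covered by Theorem \ref{meu}; and the shape that does use $e_1\mathfrak{R}$, namely $(e_1\mathfrak{R})\cdot x_{21}=\mathfrak{R}_{11}x_{21}+\mathfrak{R}_{12}x_{21}=\mathfrak{R}_{12}x_{21}$, is not controlled by your hypothesis, which kills the product in the opposite order $x_{21}\mathfrak{R}_{12}$ (in this noncommutative, nonassociative setting the two are unrelated). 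The true mirror of your first computation multiplies on the right by $e_2$ rather than on the left by $e_1$: in the unital setting of Theorem \ref{mainthm} one has $e_2=1-e_1\in\mathfrak{R}$ and $\mathfrak{R}e_2=\mathfrak{R}_{12}+\mathfrak{R}_{22}$, whence $x_{21}\cdot\mathfrak{R}e_2=x_{21}\mathfrak{R}_{12}+x_{21}\mathfrak{R}_{22}=0$ by the hypothesis and Peirce relation (iii), and Theorem \ref{meu} with $e_2\neq 0$ gives $x_{21}=0$. (Without a unit this case needs a genuinely different argument, since $e_2$ is then only an operator and $\mathfrak{R}e_2$ is not a set of the form $\mathfrak{R}b$ for an element $b$.) With this correction the proof is complete.
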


\begin{proposition}\label{prop2}
Let $\R$ be a $2,3$-torsion free alternative ring satisfying the conditions $(1)$, $(2)$ and $(3)$.
\begin{enumerate}
\item [$(\spadesuit)$] If $[a_{11}+ a_{22}, \mathfrak{R}_{12}] = 0$, then $a_{11} + a_{22} \in \mathcal{Z}(\mathfrak{R})$,
\item [$(\clubsuit)$] If $[a_{11}+ a_{22}, \mathfrak{R}_{21}] = 0$, then $a_{11} + a_{22} \in \mathcal{Z}(\mathfrak{R})$.
\end{enumerate}
\end{proposition}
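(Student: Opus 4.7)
By the symmetry of hypotheses (1)--(3) and of the Peirce decomposition under $e_1\leftrightarrow e_2$, it is enough to establish $(\spadesuit)$; $(\clubsuit)$ follows by the same argument with the roles of the two idempotents swapped. Expand the hypothesis $[a_{11}+a_{22},x_{12}]=0$ using the Peirce multiplicative rules $\mathfrak{R}_{22}\mathfrak{R}_{12}=0$ and $\mathfrak{R}_{12}\mathfrak{R}_{11}=0$; it collapses to the single identity
\begin{equation*}
a_{11}x_{12}\;=\;x_{12}a_{22}\qquad\text{for every } x_{12}\in\mathfrak{R}_{12},\qquad(*)
\end{equation*}
and the task becomes proving that $a_{11}+a_{22}$ commutes with each of the four Peirce summands; the $\mathfrak{R}_{12}$-case is $(*)$ itself.

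The cases $\mathfrak{R}_{11}$ and $\mathfrak{R}_{22}$ are handled uniformly. The key technical observation is that in an alternative ring associators alternate in their three arguments; combined with the Peirce product rules this forces the vanishing of the mixed associators $(\mathfrak{R}_{11},\mathfrak{R}_{11},\mathfrak{R}_{12})$, $(\mathfrak{R}_{11},\mathfrak{R}_{12},\mathfrak{R}_{22})$ and their duals, each by a one-line Peirce computation. Applying $(*)$ to $x_{11}z_{12}\in\mathfrak{R}_{12}$ and unpacking via these vanishing associators yields $(a_{11}x_{11})z_{12}=(x_{11}a_{11})z_{12}$, i.e.\ $[a_{11},x_{11}]\mathfrak{R}_{12}=0$, and condition (2) forces $[a_{11},x_{11}]=0$. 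Applying $(*)$ to $z_{12}y_{22}\in\mathfrak{R}_{12}$ and concluding with condition (3) gives, symmetrically, $[a_{22},y_{22}]=0$.

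The case $\mathfrak{R}_{21}$ is the one I expect to be the main obstacle. The correct move is to multiply $(*)$ on the \emph{left} by $y_{21}\in\mathfrak{R}_{21}$, not on the right:
\begin{equation*}
y_{21}(a_{11}z_{12})\;=\;y_{21}(z_{12}a_{22}).
\end{equation*}
A Peirce calculation shows $(y_{21},a_{11},z_{12})=0$ (since $a_{11}y_{21}=0$ and $a_{11}(y_{21}z_{12})=0$ as $y_{21}z_{12}\in\mathfrak{R}_{22}$), so the left side equals $(y_{21}a_{11})z_{12}$; similarly $(y_{21},z_{12},a_{22})=0$ and $(a_{22},y_{21},z_{12})=0$. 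Using also the $\mathfrak{R}_{22}$-commutation of $a_{22}$ established in the previous paragraph, the right side rewrites as $(y_{21}z_{12})a_{22}=a_{22}(y_{21}z_{12})=(a_{22}y_{21})z_{12}$. Equating, $\bigl(y_{21}a_{11}-a_{22}y_{21}\bigr)\mathfrak{R}_{12}=0$, and condition (1) yields $y_{21}a_{11}=a_{22}y_{21}$, which is exactly $[a_{11}+a_{22},y_{21}]=0$.

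The conceptual delicacy of the $\mathfrak{R}_{21}$ step lies in choosing to multiply $(*)$ on the left: this puts the unknown $w_{21}:=y_{21}a_{11}-a_{22}y_{21}$ on the \emph{right} of the resulting annihilation, matching the form $w_{21}\mathfrak{R}_{12}=0$ to which condition (1) applies directly. Multiplying on the right would instead produce $\mathfrak{R}_{12}w_{21}=0$, which is not covered verbatim by (1)--(3) and would demand an extra non-associative reduction to convert to the right-sided version. Once all four commutations are in hand, they assemble to give $a_{11}+a_{22}\in\mathcal{Z}(\mathfrak{R})$, proving $(\spadesuit)$.
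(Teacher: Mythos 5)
Your proof is correct. Note that the paper does not actually prove this proposition; it defers to \cite{FerGur2}, and the argument there is the same style of Peirce computation you give: reduce the hypothesis to $a_{11}x_{12}=x_{12}a_{22}$, test it against $x_{11}z_{12}$, $z_{12}y_{22}$, and left multiplication by $y_{21}$, kill the relevant associators by the alternating property of $(\cdot,\cdot,\cdot)$ combined with the Peirce rules, and close each case with conditions (1)--(3). Your handling of the $\mathfrak{R}_{21}$ case is the right one --- multiplying $(*)$ on the left so that the unknown lands in the form $w_{21}\mathfrak{R}_{12}=0$, which is the version of condition (1) that actually applies --- and your ordering (settling the $\mathfrak{R}_{22}$ commutation first, since the $\mathfrak{R}_{21}$ step uses it) is sound; all the associator vanishings you invoke check out, e.g. $(x_{11},y_{11},z_{12})=-(x_{11},z_{12},y_{11})=0$ since $\mathfrak{R}_{12}\mathfrak{R}_{11}=0$.
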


\begin{proposition}\label{prop3}
If $\mathcal{Z}(\R_{ij}) = \left\{a \in \R_{ij} ~ | ~ [a, \R_{ij}] = 0 \right\}$, then $\mathcal{Z}(\R_{ij}) \subseteq \R_{ij} + \mathcal{Z}(\R)$ with $i \neq j$.
\end{proposition}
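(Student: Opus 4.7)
The plan is to exploit the Peirce decomposition of $\R$ to extract, one component at a time, information about any element $a$ that centralises $\R_{ij}$. To make the statement nontrivial I read the set in question as the full centraliser $\{a\in\R : [a,\R_{ij}]=0\}$, rather than just the self-centraliser sitting inside $\R_{ij}$.

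First I would treat the case $i=1$, $j=2$; the case $i=2$, $j=1$ is symmetric and only swaps the roles of parts $(\spadesuit)$ and $(\clubsuit)$ of Proposition~\ref{prop2}. Take $a = a_{11}+a_{12}+a_{21}+a_{22}$ with $[a,x_{12}]=0$ for every $x_{12}\in\R_{12}$. Using the multiplication rules (i)--(iv) of the Peirce decomposition, I would place each summand of $ax_{12}-x_{12}a$ into the correct block $\R_{kl}$. Since the decomposition is a direct sum, the equation $[a,x_{12}]=0$ splits into four independent equations, one in each Peirce block.

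Analysing these four equations, the $\R_{22}$-component yields $a_{21}\R_{12}=0$, so condition (1) (applied with indices $2,1$) forces $a_{21}=0$, which in turn renders the $\R_{11}$-component automatic. The $\R_{12}$-component reduces to $a_{11}x_{12}=x_{12}a_{22}$ for every $x_{12}$, that is, $[a_{11}+a_{22},\R_{12}]=0$; Proposition~\ref{prop2}$(\spadesuit)$ then puts $a_{11}+a_{22}$ in $\mathcal{Z}(\R)$. The $\R_{21}$-component collapses to $[a_{12},\R_{12}]=0$, which imposes no further constraint since $a_{12}$ already lies in $\R_{12}$. Collecting the pieces gives $a = a_{12} + (a_{11}+a_{22}) \in \R_{12}+\mathcal{Z}(\R)$, as required.

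I do not expect a serious obstacle. The only delicate point is bookkeeping in the non-associative setting: each product $a_{kl}x_{12}$ and $x_{12}a_{kl}$ involves only two factors, so no reassociation is needed, and the Peirce rules (i)--(iv) suffice to classify where each one lives. The heart of the argument is then the double use of condition (1) (to kill the off-diagonal component $a_{21}$) and of Proposition~\ref{prop2} (to push the diagonal part $a_{11}+a_{22}$ into the centre).
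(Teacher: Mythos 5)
Your argument is sound, but note first that the paper itself gives no proof of Proposition \ref{prop3}: it simply refers the reader to \cite{FerGur2}, so there is no in-text proof to compare against. You are also right that the statement as printed is vacuous --- the set is defined as a subset of $\R_{ij}$ and is therefore trivially contained in $\R_{ij}+\mathcal{Z}(\R)$ --- and your reading of it as the full centraliser $\left\{a\in\R \mid [a,\R_{ij}]=0\right\}$ is the natural nontrivial interpretation. Your Peirce bookkeeping checks out: for $i=1$, $j=2$ the rules (i)--(iv) give $[a,x_{12}] = (a_{11}x_{12}-x_{12}a_{22}) + [a_{12},x_{12}] + a_{21}x_{12} - x_{12}a_{21}$, with the four summands lying in $\R_{12}$, $\R_{21}$, $\R_{22}$ and $\R_{11}$ respectively; the $\R_{22}$-component forces $a_{21}\R_{12}=0$, so condition (1) kills $a_{21}$ (and the $\R_{11}$-component then vanishes), while the $\R_{12}$-component is exactly $[a_{11}+a_{22},\R_{12}]=0$, so $(\spadesuit)$ of Proposition \ref{prop2} --- whose hypotheses are the standing assumptions of the section --- places $a_{11}+a_{22}$ in $\mathcal{Z}(\R)$. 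The conclusion $a=a_{12}+(a_{11}+a_{22})\in\R_{12}+\mathcal{Z}(\R)$ follows, and the $(2,1)$ case is the mirror image using $(\clubsuit)$. This is precisely the direct block computation one would expect, and I see no gap.
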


The main result in this section reads as follows.

\begin{theorem}\label{mainthm} 
Let $\R$ be a unital $\left\{2,3,n-1,n-3\right\}$-torsion free alternative ring with nontrivial idempotents $e_1$, $e_2$ and 
with associated Peirce decomposition $\R = \R_{11} \oplus \R_{12} \oplus \R_{21} \oplus \R_{22}$. 
Suppose that $\mathfrak{R}$ satisfies the following conditions:
\begin{enumerate}
\item [{\rm (1)}] If $x_{ij}\R_{ji} = 0$, then $x_{ij} = 0$ ($i \neq j$);
\item [{\rm (2)}] If $x_{11}\R_{12} = 0$ or $\R_{21}x_{11} = 0$, then $x_{11} = 0$;
\item [{\rm (3)}] If $\R_{12}x_{22} = 0$ or $x_{22}\R_{21} = 0$, then $x_{22} = 0$;
\item [{\rm (4)}] If $z \in \mathcal{Z}(\mathfrak{R})$ with $z \neq 0$, then $z\R = \R$.
\end{enumerate}
Let $\D \colon  \R \longrightarrow \R$ be a multiplicative Lie-type derivation of $\mathfrak{R}$. 
Then $\D$ is the form $\delta + \tau$, where $\delta$ is an additive derivation of $\R$ and $\tau$ is a mapping from 
$\R$ into the commutative centre $\mathcal{Z}(\mathfrak{R})$, such that $\tau(p_n(a_1, a_2, ..., a_n)) = 0$ for all $a_1, a_2, ... , a_n \in \R$ if and only if
\begin{enumerate}
\item[(a)] $e_2\D(\R_{11})e_2 \subseteq \mathcal{Z}(\R) e_2,$
\item[(b)] $e_1\D(\R_{22})e_1 \subseteq \mathcal{Z}(\R) e_1.$
\item[(c)] $\D(\R_{ij}) \subseteq \R_{ij}$, $1 \leq i \neq j \leq 2.$
\end{enumerate}
\end{theorem}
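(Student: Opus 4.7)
The $(\Leftarrow)$ direction is essentially structural. Given $\mathcal{D}=\delta+\tau$, one first verifies that any additive derivation of an alternative ring with Peirce decomposition satisfies $\delta(\R_{ij})\subseteq\R_{ij}$ for $i\ne j$ and $e_j\delta(\R_{ii})e_j=0$; this follows from differentiating $e_i^2=e_i$ (forcing $\delta(e_i)\in\R_{12}+\R_{21}$) and from $a_{ij}=e_i a_{ij}=a_{ij}e_j$, with care taken to use flexibility rather than associativity. Since $\tau$ is central-valued, $e_j\mathcal{D}(\R_{ii})e_j\subseteq\mathcal{Z}(\R)e_j$, giving (a) and (b); and the $\R_{ij}$ part of $\mathcal{D}(a_{ij})$ together with a central piece lives in $\R_{ij}+\mathcal{Z}(\R)$, which the hypothesis $\tau(p_n)=0$ and a short Peirce-component argument sharpens to (c).

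For the harder $(\Rightarrow)$ direction my plan is to first invoke Theorem \ref{mainthmd}, so from the outset every identity is known to hold modulo $\mathcal{Z}(\R)$. Next, set $h:=e_1\mathcal{D}(e_1)e_2-e_2\mathcal{D}(e_1)e_1$ and replace $\mathcal{D}$ by $\tilde{\mathcal{D}}(x):=\mathcal{D}(x)-[h,x]$; since $\operatorname{ad}_h$ is an ordinary derivation (hence a Lie-type derivation), $\tilde{\mathcal{D}}$ is again a multiplicative Lie-type derivation, now normalized so that $\tilde{\mathcal{D}}(e_1)\in\R_{11}+\R_{22}$. Using (a), (b), (c) together with Proposition \ref{prop2} and conditions (1)--(3), I would then show
$$\tilde{\mathcal{D}}(\R_{ij})\subseteq\R_{ij}\ (i\ne j),\qquad \tilde{\mathcal{D}}(\R_{ii})\subseteq\R_{ii}+\mathcal{Z}(\R),$$
by evaluating the Lie-type identity on arguments of the form $(x_{ij},e_1,\dots,e_1)$ and $(x_{ii},e_1,\dots,e_1)$, where the combinatorics collapses $p_n$ to a simple expression whose off-diagonal pieces force the inclusions; condition (4) is what lets us absorb surplus central elements into a well-defined adjustment.

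Now define $\delta(x_{ij}):=\tilde{\mathcal{D}}(x_{ij})$ for $i\ne j$ and $\delta(x_{ii})$ as the $\R_{ii}$-component of $\tilde{\mathcal{D}}(x_{ii})$. Almost additivity of $\tilde{\mathcal{D}}$ plus Proposition \ref{prop2} and (4) upgrades $\delta$ to a genuinely additive map on each Peirce summand and hence on $\R$. The core verification is then the Leibniz rule $\delta(ab)=\delta(a)b+a\delta(b)$, which by bilinearity reduces to the representative products $\R_{ij}\R_{jk}$, $\R_{ij}\R_{ij}$ and $\R_{ii}\R_{ii}$. For each such product I would plug $(a,b,e_1,\dots,e_1)$ or $(a,e_2,b,e_1,\dots,e_1)$ into \eqref{ident1}; after expanding, the relevant Leibniz identity appears scaled by factors of $\pm 2,\pm(n-1),\pm(n-3)$, which the $\{2,3,n-1,n-3\}$-torsion freeness precisely allows us to cancel. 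Finally, with $\tau:=\mathcal{D}-\delta$ now central-valued, applying \eqref{ident1} and expanding $\mathcal{D}=\delta+\tau$ shows that the $\tau$-terms drop out (centrality kills commutators) while the $\delta$-terms cancel by the derivation property, yielding $\tau(p_n(a_1,\dots,a_n))=0$.

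The principal obstacle is step three: translating the purely Lie-theoretic input of \eqref{ident1} into genuine associative-type Leibniz identities on each kind of Peirce product. The combinatorial coefficients produced when expanding $p_n$ with many repeated idempotents are exactly where the torsion hypotheses on $n-1$ and $n-3$ become essential, and one must simultaneously handle the non-associativity of $\R$ via flexibility and the alternative laws rather than cavalierly reassociating — this is the step where the most bookkeeping, and the most delicate use of conditions (1)--(3), will be required.
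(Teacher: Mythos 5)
Your overall architecture for the hard direction matches the paper's: almost additivity from Theorem \ref{mainthmd}, a normalization making $\D(e_1)$ central modulo an inner derivation, a componentwise definition of $\delta$ whose Leibniz rule is checked on each type of Peirce product by feeding special tuples into the identity (\ref{ident1}) and cancelling coefficients $2$, $n-1$, $n-3$, and finally $\tau:=\D-\delta$. However, your normalization step contains a genuine error that the non-associative setting does not forgive. You subtract $[h,x]$ with $h=\D(e_1)_{12}-\D(e_1)_{21}$ and assert that $\mathrm{ad}_h$ is ``an ordinary derivation (hence a Lie-type derivation).'' In an alternative ring one computes $[h,xy]-[h,x]y-x[h,y]=-(h,x,y)+(x,h,y)-(x,y,h)=-3(h,x,y)$, so $x\mapsto[h,x]$ is a derivation only when $h$ lies in the nucleus, which $\D(e_1)_{12}-\D(e_1)_{21}$ has no reason to do; moreover the commutator of an alternative ring gives a Malcev algebra, not a Lie algebra, so $\mathrm{ad}_h$ need not even satisfy the Lie $n$-derivation identity. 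Consequently the corrected map $\D-\mathrm{ad}_h$ may fail to be a multiplicative Lie-type derivation, and at the end of the argument the subtracted term cannot be reabsorbed into $\delta$ without destroying the derivation property. The paper instead subtracts $f_{y,e_1}=[L_y,L_{e_1}]+[L_y,R_{e_1}]+[R_y,R_{e_1}]$ with $y=\D(e_1)_{12}+\D(e_1)_{21}$ (Lemma \ref{lema2}); this is a bona fide inner derivation of an alternative ring in Schafer's sense and satisfies $f_{y,e_1}(e_1)=\D(e_1)_{12}+\D(e_1)_{21}$, which is exactly what the normalization requires.

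A second, smaller problem sits in your converse direction: it is not true that every additive derivation of an alternative ring with Peirce decomposition satisfies $\delta(\R_{ij})\subseteq\R_{ij}$ for $i\neq j$. Differentiating $e_1^2=e_1$ only gives $\delta(e_1)\in\R_{12}+\R_{21}$, and then $\delta(a_{12})=\delta(e_1)a_{12}+e_1\delta(a_{12})$ acquires components $\delta(e_1)_{12}a_{12}\in\R_{12}\R_{12}\subseteq\R_{21}$ and $\delta(e_1)_{21}a_{12}\in\R_{21}\R_{12}\subseteq\R_{22}$, which need not vanish (recall $\R_{12}\R_{12}=0$ fails for alternative rings). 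This is precisely why (c) is a genuine hypothesis rather than an automatic property of derivations, and why in the converse it must be extracted from $\tau(p_n(a_{ij},e_j,\dots,e_j))=0$ together with $p_n(a_{ij},e_j,\dots,e_j)=\pm a_{ij}$, as the paper does. The statements you actually need for (a) and (b), namely $e_2\delta(\R_{11})e_2=0$ and $e_1\delta(\R_{22})e_1=0$, are the ones that do follow from pure Peirce bookkeeping, and there your sketch agrees with the paper.
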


\vspace{.1in}
The following Lemmas has the same hypotheses of Theorem \ref{mainthm} and we need these Lemmas for the proof of the first part this Theorem. 

Firstly, assume that the multiplicative Lie-type derivation $\D \colon \R \longrightarrow \R$ satisfies the conditions $(a)$, $(b)$ and 
$(c)$. Let $e_{1}$ be a nontrivial idempotent of $\mathfrak{R}$. We started with the following lemma. 

\begin{lemma}\label{lema2} $\D(e_1) - f_{y,z}(e_1) \in \mathcal{Z}(\R)$, with $y= \D(e_1)_{12} + \D(e_1)_{21}$, $z = e_1$ where $f_{y,z} \colon= [L_y, L_z] + [L_y, R_z] + [R_y, R_z]$ and $L$, $R$ are left and right multiplication operators, respectively.
\end{lemma}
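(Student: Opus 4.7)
Plan: The plan is to reduce the lemma to showing that the ``diagonal'' Peirce part of $\D(e_1)$ is central, and then to establish that centrality using the multiplicative Lie-type derivation identity together with hypothesis (a).

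First, I would evaluate $f_{y,e_1}(e_1)$ directly from the definition. Expanding
\[
f_{y,e_1}(e_1) = y(e_1 e_1) - e_1(y e_1) + y(e_1 e_1) - (y e_1) e_1 + (e_1 e_1) y - (e_1 y) e_1
\]
and using the Peirce identities $e_1 y_{12} = y_{12}$, $y_{12} e_1 = 0$, $e_1 y_{21} = 0$, $y_{21} e_1 = y_{21}$ for the components $y = y_{12} + y_{21} = \D(e_1)_{12} + \D(e_1)_{21}$, the six summands collapse cleanly to $y_{12} + y_{21}$. Writing $d_{ij} := e_i \D(e_1) e_j$, this gives $\D(e_1) - f_{y,e_1}(e_1) = d_{11} + d_{22}$, so the lemma reduces to proving $d_{11} + d_{22} \in \mathcal{Z}(\R)$.

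Next, hypothesis (a) applied at $r = e_1 \in \R_{11}$ immediately yields $d_{22} = \zeta e_2$ for some $\zeta \in \mathcal{Z}(\R)$. To obtain the analogous control on $d_{11}$, I would apply the multiplicative Lie-type derivation identity to $p_n(x_{12}, e_1, \ldots, e_1) = (-1)^{n+1} x_{12}$ for an arbitrary $x_{12} \in \R_{12}$. Hypothesis (c) ensures $\D(x_{12}) \in \R_{12}$, so $p_n(\D(x_{12}), e_1, \ldots, e_1) = (-1)^{n+1} \D(x_{12})$ matches the left-hand side and cancels, leaving
\[
\sum_{i=2}^{n} p_n(x_{12}, e_1, \ldots, \D(e_1), \ldots, e_1) = 0.
\]
Expanding each summand componentwise through the Peirce decomposition $\D(e_1) = d_{11} + d_{12} + d_{21} + d_{22}$ and using the $\{2,3,(n-1),(n-3)\}$-torsion-freeness to clear the integer coefficients that arise from the $n-1$ placements of $\D(e_1)$, the off-diagonal pieces $d_{12}, d_{21}$ produce terms that either cancel or vanish, and the identity distills down to $d_{11} x_{12} = x_{12} d_{22}$. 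Substituting $d_{22} = \zeta e_2$ gives $(d_{11} - \zeta e_1) x_{12} = 0$ for every $x_{12} \in \R_{12}$, and condition (2) forces $d_{11} = \zeta e_1$. Therefore $d_{11} + d_{22} = \zeta(e_1 + e_2) = \zeta \in \mathcal{Z}(\R)$, which proves the lemma.

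The principal obstacle is the bookkeeping in the second step: one must track the $n-1$ positions in which $\D(e_1)$ can appear inside $p_n(x_{12}, e_1, \ldots, e_1)$, evaluate each iterated bracket componentwise through the Peirce decomposition, and apply the torsion-freeness hypotheses at precisely the right places to extract the clean scalar identity $d_{11} x_{12} = x_{12} d_{22}$. A secondary but minor point is that $\D$ is not assumed additive, so in the case of $n$ even one needs to verify that $\D(-x_{12}) = -\D(x_{12})$ for $x_{12} \in \R_{12}$; this can be arranged by applying the Lie-type identity to a trivial commutator together with $\D(0) = 0$.
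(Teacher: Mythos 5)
Your proposal is correct and follows essentially the same route as the paper's proof: both reduce the lemma to showing $\D(e_1)_{11}+\D(e_1)_{22}\in \mathcal{Z}(\R)$ (the computation of $f_{y,e_1}(e_1)$ being exactly the off-diagonal part of $\D(e_1)$) and then extract the relation $\D(e_1)_{11}x_{12}=x_{12}\D(e_1)_{22}$ by applying the Lie $n$-derivation identity to iterated commutators of an $\R_{12}$-element with $e_1$, clearing the resulting integer coefficient via the $(n-1)$- resp.\ $(n-3)$-torsion-freeness. The only (harmless) divergence is at the final step, where the paper invokes $(\spadesuit)$ of Proposition \ref{prop2} directly from $[\D(e_1)_{11}+\D(e_1)_{22},\R_{12}]=0$, while you take a small detour through hypothesis (a) and condition (2); both are valid under the standing assumptions.
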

\begin{proof}
In the case of $n$ is even,  we have 
$$
\begin{aligned}
\D(a_{12}) &= \D(p_n(e_1,a_{12},e_1,..., e_1)) \\
& = p_n(e_1,\D(a_{12}),e_1,..., e_1) + \sum_{i=2}^{n}p_n(e_1,a_{12},e_1,...,\D(e_1),..., e_1)\\
&=-a_{12}\D(e_1)e_1+e_1\D(e_1)a_{12} - a_{12}\D(e_1) + e_1\D(a_{12}) - \D(a_{12})e_1 \\
&\ \ \ + \sum_{i=3}^{n}p_n(e_1,a_{12},e_1,...,\D(e_1),...,e_1).
\end{aligned}
$$
Multiplying the left and right sides in the above equation by $e_1$ and $e_2$, respectively, we obtain
$$
\begin{aligned}
e_1 \D(a_{12})e_2 &= e_1\D(e_1)a_{12} - a_{12}\D(e_1)e_2 +e_1\D(a_{12})e_2+\sum_{i=3}^{n}(-1)^{n-1}[\D(e_1)_{11}+\D(e_1)_{22}, a_{12}].
\end{aligned}
$$
This implies
$$
-(n-3)[\D(e_1)_{11} + \D(e_1)_{22} , a_{12}] + 2\D(e_1)_{12}a_{12} = 0
$$ 
for all $a_{12} \in \R_{12}$. In light of $(\spadesuit)$ of Proposition \ref{prop2}, we assert that $\D(e_1)_{11} + \D(e_1)_{22} \in \mathcal{Z}(\R)$.
Taking $y= \D(e_1)_{12} + \D(e_1)_{21}$ and $z= e_1$ we see that $\D(e_1) - f_{y,z}(e_1) = \D(e_1)_{11} + \D(e_1)_{22} \in \mathcal{Z}(\R)$.

In the case of $n$ is odd, we get 
$$
\begin{aligned}
\D(a_{12}) &= \D(p_n(a_{12},e_1,..., e_1))\\
& = p_n(\D(a_{12}),e_1,..., e_1) + \sum_{i=2}^{n}p_n(a_{12},e_1,...,\D(e_1),..., e_1)\\
&= \D(a_{12})e_1-2e_1\D(a_{12})e_1 + e_1\D(a_{12})+ \sum_{i=2}^{n}p_n(a_{12},e_1,...,\D(e_1),...,e_1).
\end{aligned}
$$
Multiplying the left and right sides in the above equation by $e_1$ and $e_2$, respectively, we arrive at
$$
\begin{aligned}
e_1 \D(a_{12})e_2 &= e_1\D(a_{12})e_2 + \sum_{i=2}^{n}e_1p_n(a_{12},e_1,...,\D(e_1),...,e_1)e_2 \\
&= e_1\D(a_{12})e_2 -(n-1)[\D(e_1)_{11} + \D(e_1)_{22}, a_{12}].
\end{aligned}
$$
This gives that 
$(n-1)[\D(e_1)_{11} + \D(e_1)_{22}, a_{12}] = 0$ for all $a_{12} \in \R_{12}$. By $(\spadesuit)$ of Proposition \ref{prop2} we conclude that $\D(e_1)_{11} + \D(e_1)_{22} \in \mathcal{Z}(\R)$.
Taking $y= \D(e_1)_{12} + \D(e_1)_{21}$ and $z= e_1$ again,  we see that $\D(e_1) - f_{y,z}(e_1) = \D(e_1)_{11} + \D(e_1)_{22} \in \mathcal{Z}(\R)$. 
\end{proof}

Let us continue our discussions. It is worth noting that $f_{y,z} \colon= [L_y, L_z] + [L_y, R_z] + [R_y, R_z]$ is a derivation. According 
to \cite[Page 77]{sch}, we without loss of generality may assume that 
$\D(e_1) \in \mathcal{Z}(\R)$.

\begin{remark}\label{obs1}
If $\D(e_1) \in \mathcal{Z}(\R)$, then $\D(e_2) \in \mathcal{Z}(\R)$. Indeed, since
$$
\begin{aligned}
0 &= \D(p_n(e_2, e_1, ..., e_1) \\
&= p_n(\D(e_2),e_1,...,e_1) + \sum_{i=2}^{n}p_n(e_2,e_1,..., \D(e_1),...,e_1) \\
&= p_n(\D(e_2),e_1,...,e_1) \\
&=\D(e_2)e_1 -e_1\D(e_2)e_1 + (-1)^ne_1\D(e_2)e_1 +  (-1)^{n+1}e_1\D(e_2),
\end{aligned}
$$
we know that $e_1\D(e_2)e_2 = e_2\D(e_2)e_1 = 0$.  When $n$ is even,  for any $a_{12} \in \R_{12}$, we have
$$
\begin{aligned}
\D(a_{21}) &= \D(p_n(e_2,a_{21}, e_2...,e_2)) \\
&= p_n(\D(e_2), a_{21}, e_2,...,e_2) + p_n(e_2, \D(a_{21}),e_2,...,e_2)+ \sum_{i=3}^{n}p_n(e_2,a_{21},e_2,...,\D(e_2),...,e_2) \\
&= -(-1)^{n-2}a_{21}\D(e_2)_{11}+(-1)^{n-2}\D(e_2)_{22}a_{21} + e_2\D(a_{21})-\D(a_{21})e_2\\
&\ \ \  -(n-2)[\D(e_2)_{11} + \D(e_2)_{22}, a_{21}] \\
&= -(n-1)[\D(e_2)_{11} + \D(e_2)_{22}, a_{21}] + e_2\D(a_{21}) -\D(a_{21})e_2.
\end{aligned}
$$
Multiplying by $e_2$ and $e_1$ from the left and right sides in
the above equation, respectively, we arrive at
$-(n-1)[\D(e_2)_{11} + \D(e_2)_{22}, a_{21}] = 0$ for all $a_{21} \in \R_{21}$. 
This gives
$$
[\D(e_2)_{11} + \D(e_2)_{22}, a_{21}] = 0
$$
for all $a_{21} \in \R_{21}$, since the characteristic of $\R$ is not $n-1$. 
By $(\clubsuit)$ of Proposition \ref{prop2} it follows that $\D(e_2) = \D(e_2)_{11} + \D(e_2)_{22} \in \mathcal{Z}(\R)$. 
Now if $n$ is odd, then we have
$$
\begin{aligned}
\D(a_{21}) &= \D(p_n(a_{21},e_2...,e_2)) \\
&= p_n(\D(a_{21}), e_2,...,e_2) + \sum_{i=2}^{n}p_n(a_{21},e_2,...,\D(e_2),...,e_2) \\
&= -2e_2\D(a_{21})e_2 + e_2\D(a_{21})+\D(a_{21})e_2  -(n-1)[\D(e_2)_{11} + \D(e_2)_{22}, a_{21}].
\end{aligned}
$$
Multiplying by $e_2$ and $e_1$ from the left and right sides in
the above equation, respectively, we obtain the same result as $n$ is even.
\end{remark}

\begin{lemma}\label{lema3} $\D(\R_{ii}) \subseteq \R_{ii} + \mathcal{Z}(\R) \ (i = 1,2)$
\end{lemma}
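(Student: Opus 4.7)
\noindent\textbf{Proof proposal for Lemma \ref{lema3}.} The plan is to handle the case $i=1$ directly; the case $i=2$ will follow by interchanging the roles of $e_{1}$ and $e_{2}$ and invoking hypothesis (b) in place of (a). First I would fix $a_{11}\in\R_{11}$ and exploit $[a_{11},e_{2}]=0$, which gives $p_{n}(a_{11},e_{2},\ldots,e_{2})=0$. Applying the Lie $n$-derivation identity produces
$$
0 = p_{n}(\D(a_{11}),e_{2},\ldots,e_{2}) + \sum_{i=2}^{n} p_{n}(a_{11},e_{2},\ldots,\D(e_{2}),\ldots,e_{2}).
$$
By Remark \ref{obs1} (and the normalization $\D(e_{1})\in\mathcal{Z}(\R)$ preceding it) one has $\D(e_{2})\in\mathcal{Z}(\R)$, so the first bracket in each summand on the right that touches $\D(e_{2})$ is a commutator with a central element and hence is zero; the remaining outer brackets with $e_{2}$ then keep the value at $0$. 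Thus $p_{n}(\D(a_{11}),e_{2},\ldots,e_{2})=0$.

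Next I would write $\D(a_{11})=x_{11}+x_{12}+x_{21}+x_{22}$ in the Peirce decomposition and compute the iterated commutator with $e_{2}$. A short Peirce calculation gives $[x,e_{2}]=x_{12}-x_{21}$, while $[x_{12},e_{2}]=x_{12}$ and $[x_{21},e_{2}]=-x_{21}$. An easy induction on the number of iterations then yields
$$
p_{n}(\D(a_{11}),e_{2},\ldots,e_{2}) = x_{12} + (-1)^{n-1}x_{21}.
$$
Combined with the previous step this forces $x_{12}+(-1)^{n-1}x_{21}=0$, and since $\R_{12}\cap\R_{21}=0$ one concludes $x_{12}=x_{21}=0$. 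Therefore $\D(a_{11})=x_{11}+x_{22}$.

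Finally I would invoke hypothesis (a): $x_{22}=e_{2}\D(a_{11})e_{2}=ze_{2}$ for some $z\in\mathcal{Z}(\R)$. By Artin's theorem the subalgebra of $\R$ generated by $z$ and $e_{1}$ is associative, so $ze_{1}=e_{1}ze_{1}\in\R_{11}$, and since $\R$ is unital with $e_{1}+e_{2}=1$ one has $z=ze_{1}+ze_{2}$. Consequently
$$
\D(a_{11}) = (x_{11}-ze_{1}) + z \in \R_{11} + \mathcal{Z}(\R),
$$
which is the desired conclusion. The main technical point I expect to require care with is the vanishing of every term in the sum $\sum_{i=2}^{n}p_{n}(a_{11},e_{2},\ldots,\D(e_{2}),\ldots,e_{2})$: one has to argue by case on the position $i$ of $\D(e_{2})$ that centrality collapses the relevant inner bracket to zero, after which the outer brackets with $e_{2}$ preserve that zero. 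Once this is in place, the Peirce induction and the final rewrite using $e_{1}+e_{2}=1$ are routine.
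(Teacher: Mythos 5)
Your proposal is correct and follows essentially the same route as the paper: kill the off-diagonal Peirce components of $\D(a_{11})$ by computing $p_n(\D(a_{11}),e,\ldots,e)=0$ for an idempotent commuting with $a_{11}$ (using that the normalized $\D$ sends that idempotent into $\mathcal{Z}(\R)$), then absorb the $22$-component via hypothesis (a) and the identity $ze_2=z-ze_1$. The only cosmetic difference is that you iterate the commutator with $e_2$ (hence need Remark \ref{obs1}) where the paper uses $e_1$ directly.
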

\begin{proof}
We only show the case of $i = 1$, because the other case can be treated similarly. 
For each $a_{11} \in \R_{11}$, with $\D(a_{11}) = b_{11} + b_{12} + b_{21} + b_{22}$ we get
$$
\begin{aligned}
0 &= \D(p_n(a_{11}, e_1,...e_1)) \\
&= p_n(\D(a_{11}),e_1,...,e_1) + \sum_{i=2}^{n}p_n(a_{11},e_1,...,\D(e_1),...,e_1) \\
&= p_n(\D(a_{11}),e_1,...,e_1).
\end{aligned}
$$
It follows from this that $b_{12} = b_{21} = 0$. By $(a)$ of Theorem \ref{mainthm} we know that
$$\D(a_{11}) = b_{11} + e_2\D(a_{11})e_2 = b_{11} + ze_2 = b_{11} -e_1z + z \in \R_{11} + \mathcal{Z}(\R).$$
\end{proof}

\begin{lemma}\label{lema4}
$\D$ is an almost additive mapping. That is, for any $a, b \in \R$, $\D(a+b) - \D(a) - \D(b) \in \mathcal{Z}(\R)$.
\end{lemma}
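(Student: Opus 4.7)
The plan is essentially to reduce this lemma to an immediate application of the Theorem \ref{mainthmd} already proved in Section 2, via Proposition \ref{prop2}. Under the standing hypotheses of Theorem \ref{mainthm}, the ring $\R$ is a $\{2,3,n-1,n-3\}$-torsion free alternative ring that satisfies the conditions (1), (2), (3) (and (4), though the last one is not needed here). In particular $\R$ is $2,3$-torsion free and satisfies (1), (2), (3), so Proposition \ref{prop2} supplies exactly the two Peirce-type implications $(\spadesuit)$ and $(\clubsuit)$.

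Next, I would observe that these two implications are literally the hypotheses (i) and (ii) of Theorem \ref{mainthmd}. Since $\D$ is assumed to be a multiplicative Lie-type derivation of $\R$, all hypotheses of Theorem \ref{mainthmd} are met for our $\R$ and our $\D$. Therefore Theorem \ref{mainthmd} applies directly and yields $\D(a+b)-\D(a)-\D(b)\in\mathcal{Z}(\R)$ for every $a,b\in\R$, which is precisely the assertion of Lemma \ref{lema4}.

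It is worth emphasizing that the extra structural assumptions (a), (b), (c) placed on $\D$ earlier in this section, together with the reduction $\D(e_1)\in\mathcal{Z}(\R)$ used in Lemma \ref{lema2} and Remark \ref{obs1}, are not needed for the almost-additivity statement itself; they will be used in the \emph{subsequent} lemmas that go on to extract the derivation $\delta$ and central map $\tau$. For Lemma \ref{lema4} alone, the only input is the Lie-type derivation identity plus the Peirce-type conditions.

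The main (and essentially only) obstacle is to verify that the hypotheses of the Section 2 theorem indeed carry over verbatim. Once Proposition \ref{prop2} has been invoked to convert the ring-theoretic conditions (1)--(3) into the commutator conditions $(\spadesuit)$, $(\clubsuit)$, the proof is a one-line citation of Theorem \ref{mainthmd}; no further computation is required.
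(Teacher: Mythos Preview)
Your proposal is correct and matches the paper's own proof essentially verbatim: the paper also invokes Proposition \ref{prop2} to obtain $(\spadesuit)$ and $(\clubsuit)$ from conditions (1)--(3), and then cites Theorem \ref{mainthmd} to conclude almost additivity. Your additional remark that conditions (a), (b), (c) and $\D(e_1)\in\mathcal{Z}(\R)$ are not needed here is accurate and a useful clarification.
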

\begin{proof}
Since $\R$ is a alternative ring satisfying the conditions $(1)$, $(2)$ and $(3)$, $\R$ satisfies 
$(\spadesuit)$ and $(\clubsuit)$ by Proposition \ref{prop2}. Now using Theorem \ref{mainthmd} we get $\D$ is an 
almost additive mapping. 
\end{proof}

Now let us define the mappings $\delta$ and $\tau$. By the item $(c)$ of Theorem \ref{mainthm} and 
Lemma \ref{lema3} we have

\begin{enumerate}
\item[(A)] if $a_{ij} \in \R_{ij}$, $i \neq j$, then $\D(a_{ij}) = b_{ij} \in \R_{ij}$,
\item[(B)] if $a_{ii} \in \R_{ii}$, then $\D(a_{ii}) = b_{ii} + z, b_{ii} \in \R_{ii}$, where $z$ is a central element.
 \end{enumerate} 
It should be remarked that $b_{ii}$ and $z$ in $({\rm B})$ are uniquely determined,  Indeed, if 
$\D(a_{ii}) = b'_{ii}+ z'$, $b'_{ii} \in \R_{ii}$, $z' \in \mathcal{Z}(\R)$. Then $b_{ii} - b'_{ii} \in \mathcal{Z}(\R)$. 
Taking into account the conditions $(2)$ and $(3)$, we assert that $b_{ii} = b'_{ii}$ and $z = z'$.
Now let us define a mapping $\delta$ of $\R$ according to the rule $\delta(a_{ij}) = b_{ij}, a_{ij} \in \R_{ij}$. 
For each $a = a_{11} + a_{12} + a_{21} + a_{22} \in \R$, we define $\delta(a) = \sum \delta(a_{ij})$. And a mapping 
$\tau$ of $\R$ into $\mathcal{Z}(\R)$ is then defined by
$$
\begin{aligned}
\tau(a) &= \D(a) - \delta(a) \\
&= \D(a) - (\delta(a_{11}) + \delta(a_{12}) + \delta(a_{21}) + \delta(a_{22})) \\
&= \D(a) - (b_{11} + b_{12} + b_{21} + b_{22}) \\
&= \D(a) - (\D(a_{11}) - z_{a_{11}} + \D(a_{12}) + \D(a_{21}) + \D(a_{22}) - z_{a_{22}}) \\
&= \D(a) - (\D(a_{11}) + \D(a_{12}) + \D(a_{21}) + \D(a_{22}) - (z_{a_{11}} + z_{a_{22}})) \\
&= \D(a) - (\D(a_{11}) + \D(a_{12}) + \D(a_{21}) + \D(a_{22})). 
\end{aligned}
$$
We need to show that $\delta$ and $\tau$ are the desired mappings. 

\begin{lemma}\label{lema5}
$\delta$ is an additive mapping.
\end{lemma}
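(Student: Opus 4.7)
The plan is to reduce additivity of $\delta$ to additivity within each Peirce component and then handle the two cases (off-diagonal and diagonal) separately, leveraging the almost-additivity lemmas already proved (Lemmas \ref{lema6d} and \ref{lema7d}) together with the uniqueness of the decomposition $\D(a_{ii}) = b_{ii} + z$ noted just before the lemma.

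For arbitrary $a,b \in \R$ with Peirce decompositions $a=\sum a_{ij}$ and $b=\sum b_{ij}$, the Peirce decomposition of $a+b$ is $\sum(a_{ij}+b_{ij})$, so the definition $\delta(x)=\sum \delta(x_{ij})$ gives
$$\delta(a+b)-\delta(a)-\delta(b)=\sum_{i,j}\bigl(\delta(a_{ij}+b_{ij})-\delta(a_{ij})-\delta(b_{ij})\bigr).$$
Hence it suffices to prove $\delta(a_{ij}+b_{ij}) = \delta(a_{ij}) + \delta(b_{ij})$ for each pair $(i,j)$.

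For the off-diagonal case $i\neq j$, condition (c) of Theorem \ref{mainthm} yields $\D(\R_{ij})\subseteq\R_{ij}$, so by construction $\delta$ agrees with $\D$ on $\R_{ij}$. Lemma \ref{lema6d} then gives $\D(a_{ij}+b_{ij})=\D(a_{ij})+\D(b_{ij})$, and additivity on $\R_{ij}$ follows directly. For the diagonal case $i=j$, Lemma \ref{lema7d} produces a central element $z \in \mathcal{Z}(\R)$ with $\D(a_{ii}+b_{ii})=\D(a_{ii})+\D(b_{ii})+z$. Writing each $\D$-value in the form $(\text{B})$ as $\D(x_{ii})=\delta(x_{ii})+z_{x_{ii}}$ with $\delta(x_{ii})\in\R_{ii}$ and $z_{x_{ii}}\in\mathcal{Z}(\R)$, I rearrange to
$$\delta(a_{ii}+b_{ii})-\delta(a_{ii})-\delta(b_{ii}) = z + z_{a_{ii}} + z_{b_{ii}} - z_{a_{ii}+b_{ii}} \in \R_{ii}\cap \mathcal{Z}(\R).$$

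The one step that deserves attention is then the claim $\R_{ii}\cap\mathcal{Z}(\R)=\{0\}$, which is the uniqueness fact already observed in the paragraph defining $\delta$. I would re-derive it briefly: if $z\in\R_{11}\cap\mathcal{Z}(\R)$, then for any $x_{12}\in\R_{12}$ one has $z x_{12}\in\R_{12}$ but $x_{12}z\in\R_{12}\R_{11}=0$ by the Peirce relation (iii), whence centrality forces $z\R_{12}=0$; condition (2) of Theorem \ref{mainthm} then gives $z=0$. An identical argument with $\R_{21}$ handles $\R_{22}\cap\mathcal{Z}(\R)$ via condition (3). Combining this with the previous display yields $\delta(a_{ii}+b_{ii})=\delta(a_{ii})+\delta(b_{ii})$, completing the reduction and proving the lemma. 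I do not expect a serious obstacle here; the only subtle point is recognising that the central ambiguity in the definition of $\delta$ on $\R_{ii}$ is actually zero, which is precisely what the trivial intersection $\R_{ii}\cap\mathcal{Z}(\R)=0$ buys us.
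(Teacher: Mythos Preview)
Your proof is correct and follows essentially the same approach as the paper: reduce to each Peirce component, note that $\delta=\D$ is already additive on $\R_{ij}$ for $i\neq j$ by Lemma \ref{lema6d}, and on $\R_{ii}$ observe that $\delta(a_{ii}+b_{ii})-\delta(a_{ii})-\delta(b_{ii})$ lies in $\R_{ii}\cap\mathcal{Z}(\R)=\{0\}$ via almost additivity of $\D$. The paper's version is terser (it writes $\delta=\D-\tau$ and uses that $\tau$ is center-valued, skipping the off-diagonal case and the re-derivation of $\R_{ii}\cap\mathcal{Z}(\R)=\{0\}$), but the argument is the same.
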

\begin{proof}
We only need to prove that $\delta$ is an additive mapping on $\R_{ii}$. Let us choose any $a_{ii}, b_{ii} \in \R_{ii}$,
$$
\delta(a_{ii} + b_{ii}) - \delta(a_{ii}) - \delta(b_{ii}) = \D(a_{ii} + b_{ii}) - \tau(a_{ii} + b_{ii}) - \D(a_{ii}) + \tau(a_{ii}) - \D(b_{ii}) + \tau(b_{ii}).
$$
Thus, $\delta(a_{ii} + b_{ii}) - \delta(a_{ii}) - \delta(b_{ii}) \in \mathcal{Z}(\R) \cap \R_{ii} = \left\{0\right\}$.
\end{proof}

Let us next show that $\delta(ab) = \delta(a)b + a\delta(b)$ for all $a, b \in \R$.

\begin{lemma}\label{lema6}
For any $a_{ii}, b_{ii} \in \R_{ii}$, $a_{ij}, b_{ij} \in \R_{ij}$, $b_{ji} \in \R_{ji}$ and $b_{jj} \in \R_{jj}$ with $i \neq j$, we have
\begin{enumerate}
\item[(I)] $\delta(a_{ii}b_{ij}) = \delta(a_{ii})b_{ij} + a_{ii}\delta(b_{ij})$,
\item[(II)] $\delta(a_{ij}b_{jj}) = \delta(a_{ij})b_{jj} + a_{ij}\delta(b_{jj})$,
\item[(III)] $\delta(a_{ii}b_{ii}) = \delta(a_{ii})b_{ii} + a_{ii}\delta(b_{ii})$,
\item[(IV)] $\delta(a_{ij}b_{ij}) = \delta(a_{ij})b_{ij} + a_{ij}\delta(b_{ij})$,
\item[(V)] $\delta(a_{ij}b_{ji}) = \delta(a_{ij})b_{ji} + a_{ij}\delta(b_{ji}).$
\end{enumerate}
\end{lemma}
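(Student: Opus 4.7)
The overarching strategy is: for each of the five products, realize $a_{pq}b_{rs}$ (up to a controlled integer factor) as a value $p_n(x_1,\dots,x_n)$ in which all arguments other than $a_{pq}$ and $b_{rs}$ are equal to $e_1$ or $e_2$. Apply the multiplicative Lie $n$-derivation identity \eqref{ident1}. Since $\D(e_1),\D(e_2)\in\mathcal{Z}(\R)$ (Lemma \ref{lema2} and Remark \ref{obs1}), every term in the sum in which some $\D(e_k)$ sits inside a nested commutator vanishes: a commutator bracketed against a central element is zero, and once a zero appears the whole iterated bracket collapses. Only the two terms $p_n(\D(a_{pq}),b_{rs},e,\ldots,e)$ and $p_n(a_{pq},\D(b_{rs}),e,\ldots,e)$ survive. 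Writing $\D=\delta+\tau$ with $\tau$ central, those central pieces also drop out of the $p_n$'s, leaving $p_n(\delta(a_{pq}),b_{rs},e,\ldots,e)+p_n(a_{pq},\delta(b_{rs}),e,\ldots,e)$. The same $p_n$-realization applied now to $\delta(a_{pq}),\delta(b_{rs})$ (which lie in the correct Peirce pieces by (c) and Lemma \ref{lema3}) identifies this sum as a scalar multiple of $\delta(a_{pq})b_{rs}+a_{pq}\delta(b_{rs})$; the $\{2,3,n-1,n-3\}$-torsion-freeness lets us cancel the scalar.

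For the concrete realizations: for (I), take $p_n(a_{ii},b_{ij},e_j,\ldots,e_j)$, which after one outer bracket gives $a_{ii}b_{ij}\in\R_{ij}$, and each further bracket with $e_j$ is the identity on $\R_{ij}$. For (II), a symmetric arrangement $p_n(a_{ij},b_{jj},e_j,\ldots,e_j)$ works. For (IV), linearization of $x_{ij}^2=0$ gives $a_{ij}b_{ij}+b_{ij}a_{ij}=0$, so $[a_{ij},b_{ij}]=2a_{ij}b_{ij}\in\R_{ji}$, and $p_n(a_{ij},b_{ij},e_i,\ldots,e_i)=\pm 2\, a_{ij}b_{ij}$; the factor $2$ is absorbed by $2$-torsion-freeness. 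This closes (I), (II), (IV) by the mechanism of the first paragraph, and the output lies in $\R_{pq}$ thanks to (c), so $\D=\delta$ there and the identity becomes the desired Leibniz relation.

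The awkward cases are (V) and (III), where the product lies in a diagonal Peirce block and commutes with $e_i$, so no single $p_n$-expression with only $e_k$'s as auxiliary arguments extracts it. The plan is to deduce (V) from (I) by a \emph{testing} argument: for arbitrary $x_{ji}\in\R_{ji}$, the product $(a_{ij}b_{ji})x_{ji}$ lies in $\R_{ij}$ and is handled by the already-proved (I) on the left, while $a_{ij}(b_{ji}x_{ji})$ is handled by (I) after pairing $b_{ji}x_{ji}\in\R_{jj}$; subtracting and invoking an associator computation shows $(\delta(a_{ij}b_{ji})-\delta(a_{ij})b_{ji}-a_{ij}\delta(b_{ji}))x_{ji}=0$, after which condition (1) (or (2)) forces the bracketed term to vanish. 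The same device proves (III): test against an arbitrary $x_{ij}\in\R_{ij}$, express $(a_{ii}b_{ii})x_{ij}=a_{ii}(b_{ii}x_{ij})$ modulo an associator, and apply (I) on both sides to get $(\delta(a_{ii}b_{ii})-\delta(a_{ii})b_{ii}-a_{ii}\delta(b_{ii}))x_{ij}=0$; condition (2) then finishes the case.

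The main obstacle is the non-associativity: in an alternative (not associative) ring the associator $(a_{ii},b_{ii},x_{ij})$ need not vanish a priori, so the rewriting $(a_{ii}b_{ii})x_{ij}=a_{ii}(b_{ii}x_{ij})$ used in (III) and the analogous step in (V) must be justified by the Peirce-component identities specific to alternative rings (or by showing that the associator falls into a Peirce piece that our testing still annihilates). A secondary nuisance is the bookkeeping of signs and integer coefficients produced by the iterated brackets with $e_1$ and $e_2$; this is exactly why the hypothesis specifies $\{2,3,n-1,n-3\}$-torsion-freeness, and it will be important to verify that no other integer factors sneak in during the expansion of $p_n$.
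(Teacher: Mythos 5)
Your treatment of (I), (II) and (IV) coincides with the paper's (realize the product as $p_n(\cdot,\cdot,e,\dots,e)$, kill the $\D(e_k)$-terms by centrality, absorb the factor $2$ in case (IV)), and your testing argument for (III) against an arbitrary $r_{ij}\in\R_{ij}$ is exactly the paper's proof of (III): both reduce to (I) and then invoke condition (2), and both rely without further comment on the Peirce identity $(a_{ii}b_{ii})r_{ij}=a_{ii}(b_{ii}r_{ij})$, so the "associator obstacle" you flag is present in the original as well.

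The genuine gap is in (V), and it is not just the index slip (note that $(a_{ij}b_{ji})x_{ji}\in\R_{ii}\R_{ji}=0$ by the Peirce relations, so you must mean $x_{ij}$). Even with the indices fixed, your testing argument is circular: writing $(a_{ij}b_{ji})x_{ij}=a_{ij}(b_{ji}x_{ij})$ and expanding the right-hand side by (II) produces the term $a_{ij}\,\delta(b_{ji}x_{ij})$, and $b_{ji}x_{ij}\in\R_{ji}\R_{ij}$ is itself a product of type (V) (with $i$ and $j$ interchanged). What you obtain is therefore a relation between the two error terms $T=\delta(a_{ij}b_{ji})-\delta(a_{ij})b_{ji}-a_{ij}\delta(b_{ji})\in\R_{ii}$ and the analogous $S\in\R_{jj}$, not the vanishing of either. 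The paper breaks this circle differently: it evaluates $\tau$ on $p_n(a_{ij},b_{ji},c_{ij},e_j,\dots,e_j)=(a_{ij}b_{ji})c_{ij}-c_{ij}(b_{ji}a_{ij})$; the Lie-type derivation identity expresses $\D$ of this element using only $\delta(a_{ij})$, $\delta(b_{ji})$, $\delta(c_{ij})$ (never $\delta$ of a diagonal product), and comparison with the Leibniz expansion of $\delta$ shows that $[T+S,c_{ij}]$ equals a central element of $\R_{ij}$, hence is zero. Proposition \ref{prop2} then gives $T+S=z\in\mathcal{Z}(\R)$, and a separate computation of $\delta(a_{ij}b_{ji}a_{ij})$ via $p_n(a_{ij},b_{ji},a_{ij},e_j,\dots,e_j)=2a_{ij}b_{ji}a_{ij}$ combined with condition (4) forces $z=0$. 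Some device of this kind — treating $T$ and $S$ simultaneously through a commutator that sidesteps $\delta$ on diagonal products — is needed; your proposal as written does not supply it.
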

\begin{proof}
Let us begin with $({\rm I})$
$$
\begin{aligned}
\delta(a_{ii}b_{ij}) &= \D(a_{ii}b_{ij})\\
&= \D(p_n(a_{ii}, b_{ij},e_j,...,e_j)) \\
&= p_n(\D(a_{ii}), b_{ij},e_j,...,e_j) + p_n(a_{ii}, \D(b_{ij}),e_j,...e_j) \\
&= p_n(\delta(a_{ii}), b_{ij},e_j,...,e_j) + p_n(a_{ii}, \delta(b_{ij}),e_j,...,e_j) \\
&= \delta(a_{ii})b_{ij} + a_{ii}\delta(b_{ij}).
\end{aligned} 
$$
Let us see $({\rm II})$
$$
\begin{aligned}
\delta(a_{ij}b_{jj}) &= \D(a_{ij}b_{jj})\\
&= \D(p_n(a_{ij}, b_{jj},e_j,...,e_j)) \\
&= p_n(\D(a_{ii}), b_{ij},e_j,...,e_j) + p_n(a_{ii}, \D(b_{ij}),e_j,...,e_j) \\
&= p_n(\delta(a_{ii}), b_{ij},e_j,...,e_j) + p_n(a_{ii}, \delta(b_{ij}),e_j,...,e_j) \\
&= \delta(a_{ij})b_{jj} + a_{ij}\delta(b_{jj}).
\end{aligned} 
$$
We next show $({\rm III})$. By linearization of flexible identity and $({\rm I})$ we get
$$
\delta((a_{ii}b_{ii})r_{ij}) = \delta(a_{ii}b_{ii})r_{ij} + (a_{ii}b_{ii})\delta(r_{ij}).
$$ 
On the other hand,
$$
\delta(a_{ii}(b_{ii}r_{ij})) = \delta(a_{ii})b_{ii}r_{ij} + a_{ii}\delta(b_{ii}r_{ij}) = \delta(a_{ii})b_{ii}r_{ij} + a_{ii}(\delta(b_{ii})r_{ij} + b_{ii}\delta(r_{ij})).
$$
Considering the facts $(a_{ii}b_{ii})r_{ij} = a_{ii}(b_{ii}r_{ij})$ and $(a_{ii}b_{ii})\delta(r_{ij}) = a_{ii}(b_{ii}\delta(r_{ij}))$, we obtain
$$
(\delta(a_{ii}b_{ii}) - \delta(a_{ii})b_{ii} - a_{ii}\delta(b_{ii}))r_{ij} = 0
$$
for all $r_{ij} \in \R_{ij}$. And hence $\delta(a_{ii}b_{ii}) = \delta(a_{ii})b_{ii} + a_{ii}\delta(b_{ii})$.

Let us prove $({\rm IV})$. 
$$
\begin{aligned}
2\delta(a_{ij}b_{ij}) &= \delta(2a_{ij}b_{ij}) = \D(2a_{ij}b_{ij}) \\
&= \D(p_n(a_{ij}, b_{ij},e_i,...,e_i)) = p_n(\D(a_{ij}), b_{ij},e_i,...,e_i) + p_n(a_{ij}, \D(b_{ij}),e_i,...,e_i)\\
&= p_n(\delta(a_{ij}), b_{ij},e_i,...,e_i)+ p_n(a_{ij}, \delta(b_{ij}),e_i,...,e_i) \\
&= \delta(a_{ij})b_{ij} - b_{ij}\delta(a_{ij}) + a_{ij}\delta(b_{ij}) - \delta(b_{ij})a_{ij} \\
&= 2(\delta(a_{ij})b_{ij} + a_{ij}\delta(b_{ij})) 
\end{aligned}
$$
Since $\R$ is $2$-torsion free, we see that $\delta(a_{ij}b_{ij}) = \delta(a_{ij})b_{ij} + a_{ij}\delta(b_{ij})$. 
And finally we show the $({\rm V})$. We get
$$
\begin{aligned}
\tau(p_n(a_{ij}, b_{ji},c_{ij},e_j,...,e_j)) &= \D(p_n(a_{ij}, b_{ji},c_{ij},e_j,...,e_j)) - \delta(p_n(a_{ij}, b_{ji},c_{ij},e_j,...,e_j)) \\
&= p_n(\D(a_{ij}), b_{ji},c_{ij},e_j,...,e_j) + p_n(a_{ij}, \D(b_{ji}),c_{ij},e_j,...,e_j)\\
&\ \ \ +p_n(a_{ij}, b_{ji},\D(c_{ij}),e_j,...,e_j) - \delta((a_{ij}b_{ji})c_{ij} - c_{ij}(b_{ji}a_{ij})) \\
&= p_n(\delta(a_{ij}), b_{ji},c_{ij},e_j,...,e_j) + p_n(a_{ij}, \delta(b_{ji}),c_{ij},e_j,...,e_j)\\
&\ \ \ +p_n(a_{ij}, b_{ji},\delta(c_{ij}),e_j,...,e_j)- \delta((a_{ij}b_{ji})c_{ij}) - \delta(c_{ij}(b_{ji}a_{ij}))\\
&= (\delta(a_{ij})b_{ji})c_{ij} + c_{ij}(b_{ji}\delta(a_{ij})) + (a_{ij}\delta(b_{ji}))c_{ij} + c_{ij}( \delta(b_{ji})a_{ij}) + (a_{ij}b_{ji})\delta(c_{ij}) \\
&\ \ \ + \delta(c_{ij})(a_{ij}b_{ji})-  \delta(a_{ij}b_{ji})c_{ij} - (a_{ij}b_{ji})\delta(c_{ij}) - \delta(c_{ij})(b_{ji})a_{ij}) - c_{ij}\delta(b_{ji}a_{ij}) \\
&= [(\delta(a_{ij})b_{ji}) + a_{ij}\delta(b_{ji}) - \delta(a_{ij}b_{ji})) + (\delta(b_{ji}a_{ij}) - \delta(b_{ji})a_{ij} - b_{ji}\delta(a_{ij})) , c_{ij} ].
\end{aligned}
$$
Since $\R_{ij} \cap \mathcal{Z} = \left\{0\right\}$, we know that $[(\delta(a_{ij})b_{ji}) + a_{ij}\delta(b_{ji}) - \delta(a_{ij}b_{ji})) + (\delta(b_{ji}a_{ij}) - \delta(b_{ji})a_{ij} - b_{ji}\delta(a_{ij})) , c_{ij} ] = 0$ for all $c_{ij} \in \R_{ij}$.
By Proposition \ref{prop2} it follows that
$$
[\delta(a_{ij})b_{ji} + a_{ij}\delta(b_{ji}) - \delta(a_{ij}b_{ji})] + [\delta(b_{ji}a_{ij}) - \delta(b_{ji})a_{ij} - b_{ji}\delta(a_{ij})] = z \in \mathcal{Z}(\R).
$$ 
If $z= 0$, then $\delta(a_{ij}b_{ji}) = \delta(a_{ij})b_{ji} + a_{ij}\delta(b_{ji})$.
If $z \neq 0$, we multiply by $a_{ij}$ and get
$$
a_{ij}\delta(b_{ji}a_{ij}) - a_{ij}\delta(b_{ji})a_{ij} - a_{ij}(b_{ji}\delta(a_{ij})) = a_{ij}z.
$$
By $({\rm II)}$ we have
\begin{eqnarray*}\label{dif}
\delta(a_{ij}b_{ji}a_{ij}) - \delta(a_{ij})(b_{ji}a_{ij})- a_{ij}\delta(b_{ji})a_{ij} - a_{ij}(b_{ji}\delta(a_{ij})) = a_{ij}z.
\end{eqnarray*}
Now we see that $\delta(a_{ij}b_{ji}a_{ij}) = \delta(a_{ij})(b_{ji}a_{ij})+ a_{ij}\delta(b_{ji})a_{ij} + a_{ij}(b_{ji}\delta(a_{ij}))$. Indeed, note that $p_n(a_{ij}, b_{ji},a_{ij},e_j,...,e_j) = 2a_{ij}b_{ji}a_{ij}$. 
Thus
$$
\begin{aligned}
2\delta(a_{ij}b_{ji}a_{ij}) &= \delta(2a_{ij}b_{ji}a_{ij}) \\
&=\D(p_n(a_{ij}, b_{ji},a_{ij},e_j,...,e_j)) \\
&= p_n(\D(a_{ij}), b_{ji},a_{ij},e_j,...,e_j) + p_n(a_{ij}, \D(b_{ji}),a_{ij},e_j,...,e_j) + p_n(a_{ij}, b_{ji},\D(a_{ij}),e_j,...,e_j) \\
&= p_n(\delta(a_{ij}), b_{ji},a_{ij},e_j,...,e_j) + p_n(a_{ij}, \delta(b_{ji}),a_{ij},e_j,...,e_j) + p_n(a_{ij}, b_{ji},\delta(a_{ij}),e_j,...,e_j)\\
&= (\delta(a_{ij})b_{ji})a_{ij} + a_{ij}(b_{ji}\delta(a_{ij})) + 2a_{ij}\delta(b_{ji})a_{ij}+ (a_{ij}b_{ji})\delta(a_{ij}) + \delta(a_{ij})(b_{ji}a_{ij}) \\
&= \delta(a_{ij})(b_{ji}a_{ij}) - (a_{ij}b_{ji})\delta(a_{ij}) + a_{ij}(b_{ji}\delta(a_{ij})) + a_{ij}(b_{ji}\delta(a_{ij})) \\
&\ \ \ + 2a_{ij}\delta(b_{ji})a_{ij} + (a_{ij}b_{ji})\delta(a_{ij}) + \delta(a_{ij})(b_{ji}a_{ij}) \\
&=2(\delta(a_{ij})(b_{ji}a_{ij})+ a_{ij}\delta(b_{ji})a_{ij} + a_{ij}(b_{ji}\delta(a_{ij}))).  
\end{aligned}
$$
Applying the fact that $\R$ is $2$-torsion free yields that $\delta(a_{ij}b_{ji}a_{ij}) = \delta(a_{ij})(b_{ji}a_{ij})+ a_{ij}\delta(b_{ji})a_{ij} + a_{ij}(b_{ji}\delta(a_{ij}))$.
So $a_{ij}z = 0$. But, by $(4)$ there exist $h \in \R$ such that $zh = e_1 + e_2$ hence $a_{ij} = 0$, which is a contradiction. Therefore $\delta(a_{ij}b_{ji}) = \delta(a_{ij})b_{ji} + a_{ij}\delta(b_{ji}).$
   
\end{proof}

\begin{lemma}\label{lema7}
$\delta$ is a derivation.
\end{lemma}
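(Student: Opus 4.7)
The plan is to reduce the Leibniz rule $\delta(ab) = \delta(a)b + a\delta(b)$ for arbitrary $a, b \in \R$ to the five special cases already established in Lemma \ref{lema6}, by exploiting the additivity of $\delta$ (Lemma \ref{lema5}) together with the Peirce decomposition. First I would write $a = \sum_{i,j=1}^{2} a_{ij}$ and $b = \sum_{k,l=1}^{2} b_{kl}$ with $a_{ij}\in\R_{ij}$ and $b_{kl}\in\R_{kl}$, so that additivity applied to both $ab$ (expanded as $\sum a_{ij}b_{kl}$) and to $\delta(a)$, $\delta(b)$ gives
$$\delta(ab) - \delta(a)b - a\delta(b) \;=\; \sum_{i,j,k,l}\bigl(\delta(a_{ij}b_{kl}) - \delta(a_{ij})b_{kl} - a_{ij}\delta(b_{kl})\bigr),$$
and my aim becomes showing that each of the sixteen summands on the right vanishes.

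Next I would invoke the Peirce multiplicative relations (i)--(iv) recorded right after the Peirce decomposition. These tell me that $a_{ij}b_{kl}$ is nonzero only when $j=k$ (yielding a product in $\R_{il}$) or when $(i,j)=(k,l)$ with $i\neq j$ (yielding a product in $\R_{ji}$). Every such nonzero case is covered by exactly one of the identities (I)--(V) of Lemma \ref{lema6}: products $a_{ii}b_{ii}$ fall under (III); mixed diagonal/off-diagonal products $a_{ii}b_{ij}$ and $a_{ji}b_{ii}$ fall under (I) and (II); like off-diagonal products $a_{ij}b_{ij}$ fall under (IV); and opposite off-diagonal products $a_{ij}b_{ji}$ fall under (V). For the degenerate cases—such as $a_{ii}b_{jj}$ with $i\neq j$, or any other mismatched Peirce product killed by rule (iii)—I must also verify that the candidate Leibniz terms $\delta(a_{ij})b_{kl}$ and $a_{ij}\delta(b_{kl})$ vanish alongside $\delta(a_{ij}b_{kl}) = \delta(0) = 0$. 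This hinges on the key fact that $\delta$ preserves every Peirce component: for $i\neq j$ by condition (c) of Theorem \ref{mainthm}, and for the diagonal because the central part of $\D(a_{ii})$ was absorbed into $\tau$ when $\delta$ was defined, so $\delta(\R_{ii})\subseteq\R_{ii}$.

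The anticipated obstacle is not substantive but organizational: ensuring that the sixteen pairs $(a_{ij},b_{kl})$ are correctly partitioned between those immediately killed by the Peirce rules (where one checks simultaneous vanishing on both sides, using component preservation of $\delta$) and those handled by the appropriate case of Lemma \ref{lema6}. Once this bookkeeping is cleanly laid out, the proof reduces to a summation of the five already-proven Leibniz identities, and no new algebraic work is required.
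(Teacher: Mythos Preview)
Your proposal is correct and follows essentially the same route as the paper: decompose $a$ and $b$ into Peirce components, expand $ab$, apply the additivity of $\delta$ from Lemma~\ref{lema5}, and then invoke the five cases of Lemma~\ref{lema6} for each surviving product. If anything, your write-up is more explicit than the paper's about the degenerate Peirce pairs (where both sides vanish because $\delta$ preserves each $\R_{ij}$), a point the paper leaves implicit.
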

\begin{proof}
For any $a, b \in \R$, we have
$$
\begin{aligned}
\delta(ab) &= \delta((a_{11} + a_{12} + a_{21} + a_{22})(b_{11} + b_{12} + b_{21} + b_{22})) \\
&= \delta(a_{11}b_{11}) + \delta(a_{11}b_{12}) + \delta(a_{12}b_{12}) + \delta(a_{12}b_{21}) + \delta(a_{12}b_{22}) \\
&\ \ \ + \delta(a_{21}b_{11}) + \delta(a_{21}b_{12}) + \delta(a_{21}b_{21}) + \delta(a_{22}b_{21}) + \delta(a_{22}b_{22}) \\
&= \delta(a)b + a \delta(b)
\end{aligned}
$$
by Lemmas \ref{lema5} and \ref{lema6}.
\end{proof}

\begin{lemma}\label{lema8}
$\tau$ sends the commutators into zero.
\end{lemma}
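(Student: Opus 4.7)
The plan is to exploit that both $\D$ and the derivation $\delta$ constructed in Lemma~\ref{lema7} satisfy a Leibniz-type identity on the iterated commutator $p_n$, so that their difference $\tau$ inherits it; then I observe that inserting a central element in any position of $p_n$ makes $p_n$ vanish, which finishes the argument.

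\textbf{Step 1 (Leibniz rule for $\delta$ on $p_n$).} First I would establish, by induction on $n \geq 2$, that the additive derivation $\delta\colon\R\to\R$ satisfies
$$
\delta\bigl(p_n(a_1,\ldots,a_n)\bigr) \;=\; \sum_{i=1}^{n} p_n\bigl(a_1,\ldots,\delta(a_i),\ldots,a_n\bigr).
$$
The base case $n=2$ is the identity $\delta([x,y]) = \delta(xy-yx) = [\delta(x),y]+[x,\delta(y)]$, which depends only on additivity and the Leibniz rule for the ring product (valid in any ring, associative or not). For the inductive step I write $p_n = [p_{n-1},a_n]$, apply the base case, and plug in the inductive hypothesis.

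\textbf{Step 2 (Leibniz rule for $\tau$ on $p_n$).} Since $\D$ is a multiplicative Lie-type derivation (identity \eqref{ident1}) and $\delta$ satisfies the formula from Step~1, subtracting yields
$$
\tau\bigl(p_n(a_1,\ldots,a_n)\bigr) \;=\; \sum_{i=1}^{n} p_n\bigl(a_1,\ldots,\tau(a_i),\ldots,a_n\bigr),
$$
because $\tau = \D - \delta$ and both sides agree with how $\D$ and $\delta$ distribute across $p_n$.

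\textbf{Step 3 (vanishing via centrality).} Each $\tau(a_i)$ lies in $\mathcal{Z}(\R)$ by construction. If $i=1$, then the innermost commutator $[\tau(a_1),a_2]=0$, and the successive brackets with $a_3,\ldots,a_n$ stay zero. If $2 \leq i \leq n$, then the $i$-th bracket $[p_{i-1}(a_1,\ldots,a_{i-1}),\tau(a_i)]=0$, and again everything that follows is zero. Hence every summand in the formula of Step~2 vanishes, giving $\tau(p_n(a_1,\ldots,a_n))=0$.

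\textbf{Expected obstacle.} There is nothing substantive to overcome: the argument is mechanical once Step~1 is in place. The only point to be careful about is that $\R$ is merely alternative, not associative, but since $p_n$ is built purely from the Lie bracket and the Leibniz rule for a derivation over $xy$ and $yx$ never requires associativity, the computations go through unchanged.
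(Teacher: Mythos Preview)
Your proof is correct and follows essentially the same approach as the paper: both use that $\delta$ satisfies the Leibniz rule on $p_n$ (your Step~1, implicit in the paper's last equality), the defining identity for $\D$, and the fact that central values of $\tau$ kill $p_n$ in any slot. The only cosmetic difference is order: the paper writes $\tau(p_n)=\D(p_n)-\delta(p_n)$, expands $\D(p_n)$, replaces each $\D(a_i)$ by $\delta(a_i)$ inside $p_n$ (using centrality of $\tau(a_i)$), and recognizes the result as $\delta(p_n)$; you instead subtract first to obtain $\sum_i p_n(\ldots,\tau(a_i),\ldots)$ and then kill each summand.
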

\begin{proof}
For any $a_1, a_2, \cdots, a_n\in \R$, we get
$$
\begin{aligned}
\tau(p_n(a_{1}, a_{2},...,a_n)) &= \D(p_n(a_{1}, a_{2},...,a_n)) - \delta(p_n(a_{1}, a_{2},...,a_n))\\
&= \sum_{i=1}^{n}p_n(a_{1}, a_{2},...,a_{i-1},\D(a_i),a_{i+1},...,a_n) - \delta(p_n(a_{1}, a_{2},...,a_n))\\
&= \sum_{i=1}^{n}p_n(a_{1}, a_{2},...,a_{i-1},\delta(a_i),a_{i+1},...,a_n) - \delta(p_n(a_{1}, a_{2},...,a_n))\\
\\&= 0.
\end{aligned}
$$
\end{proof}

Let us now assume that $\D\colon  \R \longrightarrow \R$ is a Lie-type derivation of the form $\D = \delta + \tau$, 
where $\delta$ is a derivation of $\R$ and $\tau$ is a mapping from $\R$ into its commutative center $\mathcal{Z}(\R)$, such that 
$\tau(p_n(a_{1}, a_{2},...,a_n)) = 0$ for all $a_1,a_2,...,a_n \in \R$.
Then for any $a_{11}\in \R_{11}$, we see that
$$
\begin{aligned}
e_{2}\D(a_{11})e_2 &= e_2\delta(a_{11})e_2 + e_2\tau(a_{11})e_2 \\
&= e_2\delta(e_1 a_{11})e_2 + e_2\tau(a_{11})e_2\\
&= e_2(\delta(e_1)a_{11} +e_1 \delta(a_{11}))e_2 + e_2\tau(a_{11})e_2 \\
&= e_2(\delta(e_1)a_{11})e_2 + e_2(e_1 \delta(a_{11}))e_2 + e_2\tau(a_{11})e_2 \\
&=(e_2\delta(e_1))(a_{11}e_2) + (e_2e_1) (\delta(a_{11})e_2) + e_2\tau(a_{11})e_2 \\
&=e_2\tau(a_{11})e_2 \in \mathcal{Z}(\R)e_2.
\end{aligned}
$$
Now
$$
\begin{aligned}
e_{1}\D(a_{22})e_1 &=e_1\delta(a_{22})e_1 + e_1\tau(a_{22})e_1 \\
&= e_1\delta(e_2 a_{22})e_1 + e_1\tau(a_{22})e_1\\
&= e_1(\delta(e_2)a_{22} +e_2 \delta(a_{22}))e_1 + e_1\tau(a_{22})e_1 \\
&= e_1(\delta(e_2)a_{22})e_1 + e_1(e_2 \delta(a_{22}))e_1 + e_1\tau(a_{22})e_1 \\
&=(e_1\delta(e_2))(a_{22}e_1) + (e_1e_2) (\delta(a_{22})e_1) + e_1\tau(a_{22})e_1 \\
&= e_1\tau(a_{22})e_1 \in \mathcal{Z}(\R)e_1
\end{aligned}
$$
for all $a_{22} \in \R_{22}$. Furthermore, 
$$
\D(a_{ij}) = (\delta + \tau)(a_{ij}) = \delta(p_n(a_{ij}, e_j,...,e_j)) + \tau(p_n(a_{ij}, e_j,...,e_j)) = p_n(\delta(a_{ij}), e_j,...,e_j) \in \R_{ij}.
$$
This shows the items $(a)$, $(b)$, $(c)$ and the proof of the Theorem \ref{mainthm} is complete.
\vspace{2mm}

\begin{corollary}
Let $\R$ be an unital prime alternative ring with nontrivial idempotent satisfying $(4)$ and 
$\D \colon  \R \longrightarrow \R$ be a multiplicative Lie-type derivation. Then
$\D$ is the form of $\delta + \tau$, where $\delta$ is a derivation of $\R$ and $\tau$ is a 
mapping from $\R$ into its commutative center $\mathcal{Z}(\R)$, such that 
$\tau(p_n(a_{1}, a_{2},...,a_n)) = 0$ for all $a_{1}, a_{2},...,a_n \in \R$ if and only if
\begin{enumerate}
\item[(a)] $e_2\D(\R_{11})e_2 \subseteq \mathcal{Z}(\R) e_2,$
\item[(b)] $e_1\D(\R_{22})e_1 \subseteq \mathcal{Z}(\R) e_1,$
\item[(c)] $\D(\R_{ij}) \subseteq \R_{ij}$, $1 \leq i \neq j \leq 2.$
\end{enumerate}
\end{corollary}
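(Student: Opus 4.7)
The plan is to obtain this corollary as an immediate specialization of Theorem \ref{mainthm}. To apply that theorem, I need to verify its four structural hypotheses (1)--(4) together with the torsion-freeness and nontrivial-idempotent assumptions. Conditions (1), (2), (3) are precisely the content of the first proposition of Section 3, which states that every prime alternative ring automatically satisfies them; hence primeness of $\R$ handles these three conditions without any further work. The torsion-freeness hypothesis is inherited from the standing convention of Section 3 that $\R$ is $\{2,3,n-1,n-3\}$-torsion free, and a nontrivial idempotent is explicitly assumed. Finally, condition (4) is built directly into the hypotheses of the corollary.

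With every hypothesis of Theorem \ref{mainthm} now in force, its biconditional conclusion reads exactly as the biconditional conclusion of the corollary. Concretely, the forward direction is handled by the final calculations at the end of the proof of Theorem \ref{mainthm}, which show that any decomposition $\D=\delta+\tau$ of the stated form forces $e_{2}\D(\R_{11})e_{2}\subseteq\mathcal{Z}(\R)e_{2}$, $e_{1}\D(\R_{22})e_{1}\subseteq\mathcal{Z}(\R)e_{1}$, and $\D(\R_{ij})\subseteq\R_{ij}$; the reverse direction is the construction of $\delta$ and $\tau$ carried out through Lemmas \ref{lema2}--\ref{lema8} of Section 3. The main (and essentially only) step of the corollary is therefore to cite the proposition reducing primeness to conditions (1)--(3); no obstacle of substance arises, since all analytical work has already been absorbed into Theorem \ref{mainthm}.
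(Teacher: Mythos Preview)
Your proposal is correct and matches the paper's approach: the paper gives no explicit proof for this corollary, treating it as an immediate consequence of Theorem~\ref{mainthm} once Proposition~3.1 supplies conditions (1)--(3) from primeness, with condition (4) assumed directly and the torsion-freeness coming from the standing hypothesis of Section~3.
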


Let us end our work with a direct application to simple alternative rings.

\begin{corollary}
Let $\R$ be an unital simple alternative ring with nontrivial idempotent and $\D \colon  \R \longrightarrow \R$ 
be a multiplicative Lie-type derivation. Then
$\D$ is the form $\delta + \tau$, where $\delta$ is a derivation of $\R$ and $\tau$ is a mapping 
from $\R$ into its commutative center $\mathcal{Z}(\R)$, such that $\tau(p_n(a_{1}, a_{2},...,a_n)) = 0$ for all $a_{1}, a_{2},...,a_n \in \R$ if and only if
\begin{enumerate}
\item[(a)] $e_2\D(\R_{11})e_2 \subseteq \mathcal{Z}(\R) e_2,$
\item[(b)] $e_1\D(\R_{22})e_1 \subseteq \mathcal{Z}(\R) e_1,$
\item[(c)] $\D(\R_{ij}) \subseteq \R_{ij}$, $1 \leq i \neq j \leq 2.$
\end{enumerate}
\end{corollary}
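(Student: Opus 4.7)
The plan is to deduce this final corollary directly from the preceding Corollary about unital prime alternative rings, by showing that in the simple setting all of its hypotheses are automatic; both directions of the ``if and only if'' then transfer at once. Note that the statement is just the preceding corollary with ``prime $+$ (4)'' replaced by ``simple'', so there is no new computation to carry out on the multiplicative Lie-type derivation itself.

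First I would verify that a unital simple alternative ring is prime. If $\mathfrak{A},\mathfrak{B}$ are nonzero two-sided ideals of $\R$, simplicity forces $\mathfrak{A}=\mathfrak{B}=\R$, so $\mathfrak{A}\mathfrak{B}=\R\cdot\R\ni 1\neq 0$. Hence $\R$ is prime, and by the Proposition stated just before Theorem~\ref{mainthm} the conditions (1), (2), (3) hold automatically.

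Next I would verify that condition (4) holds. Let $z\in \mathcal{Z}(\R)$ be nonzero. The key fact I would invoke is the classical result that in a simple alternative ring the commutative centre $\mathcal{Z}(\R)$ coincides with its intersection with the nucleus $\mathcal{N}(\R)$; equivalently, $z$ lies in $\mathcal{N}(\R)$. (For associative simple rings this is trivial; for the genuinely non-associative simple alternative rings, namely the Cayley--Dickson algebras, the centre is the ground field, which sits inside the nucleus.) Therefore $z\R=\R z$ is a two-sided ideal of $\R$, and since $\R$ is unital it contains $z\cdot 1 = z\neq 0$. Simplicity forces $z\R=\R$, which is precisely (4).

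With (1)--(4) in hand, the preceding Corollary (unital prime alternative case) applies verbatim, delivering both implications of the equivalence: a multiplicative Lie-type derivation $\D$ decomposes as $\D=\delta+\tau$ with $\delta$ an (additive) derivation and $\tau\colon\R\to\mathcal{Z}(\R)$ annihilating $p_n$-values if and only if $\D$ satisfies (a), (b), (c). The only delicate point in the entire argument — and the one I would single out as the place deserving care rather than as a real obstacle — is justifying that the nonzero central element $z$ lies in the nucleus, so that $z\R$ is genuinely two-sided; everything else is bookkeeping reducing the simple case to Theorem~\ref{mainthm}.
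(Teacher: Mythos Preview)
Your proposal is correct and follows the same reduction as the paper: show that a unital simple alternative ring is prime and that condition~(4) holds, then invoke the preceding corollary. The only cosmetic difference is in the justification of~(4): the paper simply records that $\mathcal{Z}(\R)$ is a field (so every nonzero $z$ is invertible and $z\R=\R$ via the inverse property), whereas you argue that $z$ is nuclear and hence $z\R$ is a nonzero two-sided ideal, forced to equal $\R$ by simplicity; both routes rest on the same structure theory of simple alternative rings.
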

\begin{proof}
It is enough to remark that every simple ring is prime and $\mathcal{Z}(\R)$ is a field.
\end{proof}

\vspace{4mm}


\end{document}